\providecommand{\tabularnewline}{\\}
\theoremstyle{definition}
\newtheorem*{defn*}{\protect\definitionname}
\theoremstyle{plain}
\newtheorem*{conjecture*}{\protect\conjecturename}
\theoremstyle{definition}
 \newtheorem{example}{\protect\examplename}
\theoremstyle{plain}
\newtheorem{thm}{\protect\theoremname}
\theoremstyle{plain}
\newtheorem{lem}{\protect\lemmaname}
\providecommand{\conjecturename}{Conjecture}
\providecommand{\definitionname}{Definition}
\providecommand{\examplename}{Example}
\providecommand{\lemmaname}{Lemma}
\providecommand{\theoremname}{Theorem}
\begin{document}
\title[The Component Counting Process of a Uniform Random Variable]{ On the Dependence of the Component Counting Process of a Uniform
Random Variable}
\thanks{We thank Prof. Michael Cranston and Prof. Nathan Kaplan for their
support in this project.}
\begin{abstract}
We are concerned with the general problem of proving the existence
of joint distributions of two discrete random variables $M$ and $N$
subject to infinitely many constraints of the form $\mathbb{P}\left(M=i,N=j\right)=0$.
In particular, the variable $M$ has a countably infinite range and
the other variable $N$ is uniformly distributed with finite range.
The constraints placed on the joint distributions will require, for
most elements $j$ in the range of $N$, $\mathbb{P}\left(M=i,N=j\right)=0$
for infinitely many values of $i$ in the range of $M$, where the
corresponding values of $i$ depend on $j$. To prove the existence
of such joint distributions, we apply a theorem proved by Strassen
on the existence of joint distributions with prespecified marginal
distributions.

We consider some combinatorial structures that can be decomposed into
components. Given $n\in\mathbb{N}$, consider an assembly, multiset,
or selection $A_{n}$ among elements of $\left[n\right]\coloneqq\left\{ 1,2,\ldots,n\right\} $,
and consider a uniformly distributed random variable $N\left(n\right)$
on $A_{n}$. For each $i\le n$, denote by $C_{i}\left(n\right)$
the number of components of $N\left(n\right)$ of size $i$ so that
$\sum_{i\le n}iC_{i}\left(n\right)=n$. In each of these combinatorial
structures, there exists infinitely many processes $\left(\left(Z_{i}\left(n,x\right)\right)_{i\le n}\right)_{x}$,
indexed by a real parameter $x,$ consisting of non-negative independent
variables $\left(Z_{i}\left(n,x\right)\right)_{i\le n}$ such that
the distribution of the vector $\left(C_{i}\left(n\right)\right)_{i\le n}$
equals the distribution of the vector $\left(Z_{i}\left(n,x\right)\right)_{i\le n}$
conditional on the event $\left\{ \sum_{i\le n}iZ_{i}\left(n,x\right)=n\right\} $.
Let $M\left(n,x\right)$ denote a random variable whose components
are given by $\left(Z_{i}\left(n,x\right)\right)_{i\le n}$. We introduce
the notion of pivot mass which is then combined with Strassen's work
to provide couplings of $M\left(n,x\right)$ and $N\left(n\right)$
with desired properties. For each of these combinatorial structures,
we prove that there exists a real number $x\left(n\right)$ for which
we can couple $M\left(n,x\right)$ and $N\left(n\right)$ with $\sum_{i\le n}\left(C_{i}\left(n\right)-Z_{i}\left(n,x\right)\right)^{+}\le1$
when $x>x\left(n\right)$. We are providing a partial answer to the
question ``how much dependence is there in the process $\left(C_{i}\left(n\right)\right)_{i\le n}?"$
\end{abstract}

\maketitle

\section{Introduction}

Our results regard the component counting process of a discrete uniform
random variable in a combinatorial structure, and these results are
provided by establishing the existence of couplings of random variables.
\begin{defn*}
Let $X$ and $Y$ be random variables defined on probability spaces\footnote{In our discrete setting, we can consider probability spaces $\left(\Omega,\mathcal{F},\mathbb{P}\right)$
of the following form: (i) $\Omega$ is a nonempty set that is finite
or countably infinite; (ii) $\mathcal{F}$ is the power set of $\Omega$;
and (iii) the probability measure $\mathbb{P}$ is defined as $\mathbb{P}\left(E\right)=\sum_{e\in E}p\left(e\right)$
for all $E\in F$, where $p$ is a probability mass function (PMF)
\textendash{} i.e., $p:\Omega\to\left[0,1\right]$ with $\sum_{s\in\Omega}p\left(s\right)=1$. } $\left(\Omega_{X},\mathcal{F}_{X},\mathbb{P}_{X}\right)$ and $\left(\Omega_{Y},\mathcal{F}_{Y},\mathbb{P}_{Y}\right)$.\footnote{The probability measure $\mathbb{P}_{X}$ is defined by $\mathbb{P}_{X}\left(i\right)=\mathbb{P}\left(X=i\right)$.}
A \textbf{coupling }of $X$ and $Y$ is a probability space $\left(\Omega,\mathcal{F},\mathbb{P}\right)$
in which there exists random variables $X'$ and $Y'$ such that $X'$
has the same distribution as $X$ and $Y'$ has the same distribution
as $Y$.
\end{defn*}
\noindent For each of the random variables $X$ considered in this
paper, $X$ and $X'$ will share the same range. Thus, for each coupling
of $X$ and $Y,$ the definition implies that there exists a joint
probability mass function $p\left(x,y\right)\coloneqq\mathbb{P}\left(X'=x,Y'=y\right)$
whose marginal distributions satisfy 
\begin{eqnarray*}
\sum_{y:p\left(x,y\right)>0}p\left(x,y\right) & = & \mathbb{P}_{X}\left(x\right),\\
\sum_{x:p\left(x,y\right)>0}p\left(x,y\right) & = & \mathbb{P}_{Y}\left(y\right).
\end{eqnarray*}
Equivalently\footnote{When describing a particular coupling of $X$ and $Y$, we often write
$X$ and $Y$ instead of $X'$ and $Y',$ respectively.}, $\mathbb{P}_{X'}\left(x\right)=\mathbb{P}_{X}\left(x\right)$ and
$\mathbb{P}_{Y'}\left(y\right)=\mathbb{P}_{Y}\left(y\right)$ for
all $x$ in the range of $X$ and all $y$ in the range of $Y$. In
particular, we provide couplings, with some constraints, of a uniform
random variable $N$, necessarily consisting of a dependent component
process, with another random variable $M$ having the following properties:
\begin{enumerate}
\item $M$ has infinite range.
\item $M$ and $N$ have the same number of components (but the sizes of
their corresponding components need not be equal).
\item The components of $M$ are independent and nonnegative.
\end{enumerate}
\noindent The constraints imposed on our couplings are motivated by
the following conjecture, proposed by Richard Arratia in $\S$2.2
of \cite{key-1}, which we now describe. Consider a uniformly distributed
variable $N\left(n\right)\in\left[n\right]$ with prime factorization
\begin{eqnarray*}
N\left(n\right) & = & \prod_{p\le n}p^{C_{p}\left(n\right)}.
\end{eqnarray*}
It can be shown that the prime power process $\left(C_{p}\left(n\right)\right)_{p\le n}$
converges in distribution to a process $\left(Z_{p}\right)_{p\le n}$
of independent variables where $Z_{p}$ is a geometric random variable
of parameter $\frac{1}{p}$ and range $\mathbb{Z}_{\ge0}$, for each
prime $p\le n.$ Defining 
\begin{eqnarray*}
M\left(n\right) & = & \prod_{p\le n}p^{Z_{p}},
\end{eqnarray*}
we state Arratia's conjecture.
\begin{conjecture*}
For all $n\ge1$, it is possible to construct $N\left(n\right)$ uniformly
distributed from $1$ to $n$, $M\left(n\right)$ and a prime $P\left(n\right)$
such that 
\begin{eqnarray*}
\text{always } &  & N\left(n\right)\text{ divides }M\left(n\right)P\left(n\right).
\end{eqnarray*}
\end{conjecture*}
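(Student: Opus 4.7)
The plan is to apply Strassen's theorem on the divisibility poset of $\mathbb{Z}_{>0}$. A coupling of $N(n)$ with any integer-valued random variable $Y$ satisfying $N(n)\mid Y$ almost surely exists if and only if $\mathbb{P}(N(n)\in U)\le \mathbb{P}(Y\in U)$ for every upper set $U$ (a set closed under taking multiples). The task is then to arrange $Y = M(n)P(n)$, with $P(n)$ a prime-valued random variable whose conditional law given $M(n)$ is free to be chosen, so that the Strassen inequality holds for every $U$ simultaneously.

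\textbf{Key steps.} First, I would record the two marginals in their useful forms: the uniform law on $[n]$ giving $\mathbb{P}(N(n)\in U) = \#(U\cap[n])/n$, and the product-geometric law of $M(n)$ giving $\mathbb{P}(M(n)=m) = \prod_{p\le n}(1-1/p)(1/p)^{v_p(m)}$ for each $m$ with all prime factors at most $n$. Second, I would translate the divisibility condition into exponent form: $N(n)\mid M(n)P(n)$ for some prime $P(n)\le n$ if and only if
\[
\sum_{p\le n}\bigl(C_p(n) - Z_p\bigr)^+ \le 1,
\]
with the exceptional prime (if any) playing the role of $P(n)$. This reformulation makes clear that the problem is exactly to couple the dependent vector $\mathbf{C}(n) = (C_p(n))_{p\le n}$ with the independent geometric vector $\mathbf{Z}(n) = (Z_p)_{p\le n}$ with total excess at most one. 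Third, I would invoke the notion of \emph{pivot mass} introduced earlier in the paper: for each atom $m$ in the range of $M(n)$ the pivot mass at $m$ quantifies how much uniform mass on multiples of $m$ can be absorbed by multiplying $m$ by a single additional prime, and the plan is to aggregate these local bounds into a global verification of the Strassen inequality.

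\textbf{Main obstacle.} The delicate point will be verifying Strassen's inequality uniformly over all upper sets. Principal filters $U = \{a : k\mid a\}$ are the decisive test case: the left-hand side equals $\lfloor n/k\rfloor/n$, essentially $1/k$, while the right-hand side becomes $\sum_m \mathbb{P}(M(n)=m)\sum_P \mathbb{P}(P(n)=P\mid M(n)=m)\,\mathbf{1}[k\mid mP]$. Although $M(n)$ was engineered so that each marginal $\mathbb{P}(p\mid M(n)) = 1/p$ matches $\mathbb{P}(p\mid N(n))$ to leading order, the joint dependence of $\mathbf{C}(n)$ is genuinely different from that of $\mathbf{Z}(n)$, and the single prime $P(n)$ must absorb this discrepancy for every upper set at once. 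I expect the argument to proceed in two stages: first, calibrate the conditional law of $P(n)$ given $M(n)$ to handle the hardest principal filters, and then use the pivot-mass estimates to extend the inequality to arbitrary upper sets by exploiting the lattice structure of divisibility. Without a new idea beyond what the paper establishes for assemblies, multisets, and selections, this last extension is where one should expect the proof to be incomplete, consistent with the abstract's description of the result as a partial answer.
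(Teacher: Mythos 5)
This statement is Arratia's conjecture, which the paper explicitly leaves open; the paper contains no proof of it, so there is no paper argument to compare your attempt against on its own terms. What the paper proves (Theorem 1) is a deliberate analogue in different combinatorial structures --- assemblies, multisets, and selections --- where the weight identity $\sum_{i\le n} i\,C_i(n)=n$ holds exactly and where, crucially, there is a one-parameter family $\left(Z_i(n,x)\right)_{i\le n}$ of independent processes all satisfying the conditioning relation (1). The engine of the paper's proof is that the pivot mass of every column can be made arbitrarily small by sending $x$ to the boundary of its parameter range (condition (6), verified via Lemmas 1--3), which is exactly what makes Strassen's inequality (13) hold for all $L(n)$. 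In Arratia's setting the candidate process $(Z_p)_{p\le n}$ of independent geometrics with parameter $1/p$ is a single fixed object; there is no tunable parameter, so the pivot masses are what they are and cannot be driven to zero, and the paper's mechanism stops there.

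Your proposal correctly identifies the general shape of the paper's machinery (Strassen plus pivot-mass bookkeeping) and you honestly flag that the final step is missing; that assessment is right, and you should state plainly that the conjecture remains unproved rather than describe the outcome as a ``partial answer'' --- the paper's ``partial answer'' refers to Theorem 1, not to this conjecture. Two sharpenings. First, your formulation of Strassen's condition as stochastic domination over upper sets of the divisibility poset is not the form the paper uses and does not quite fit the problem: the paper applies Strassen's Theorem 11 with a closed set $\omega\subseteq S\times T$ and quantifies over closed $L\subseteq T$, and in your setup $P(n)$ is itself part of the coupling to be built, so the domination criterion you state (which presumes a fixed target law $Y$) does not directly apply. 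Second, the obstacle you diagnose --- that the joint law of $(C_p)_{p\le n}$ differs from that of $(Z_p)_{p\le n}$ --- is true but not the precise point of failure; the missing ingredient is the free parameter $x$, and hence any analogue of condition (6). Without a family of admissible processes to tune, there is no way to force $\mathcal{PM}(L(n)) \le 1 - \#L(n)/k_n$ for all $L(n)$, and no substitute argument is offered either in the paper or in your sketch.
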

\noindent Equivalently, the conjecture states that there exists a
coupling of $M\left(n\right)$ and $N\left(n\right)$ such that we
always have 
\begin{eqnarray*}
\sum_{p\le n}\left(C_{p}\left(n\right)-Z_{p}\right)^{+} & \le & 1.
\end{eqnarray*}
This is also equivalent to the existence of a joint probability mass
function $p\left(\cdot,\cdot\right)$ with marginals corresponding
to $M\left(n\right)$ and $N\left(n\right)$ such that $p\left(\cdot,\cdot\right)=0$
when 
\begin{eqnarray*}
\sum_{p\le n}\left(C_{p}\left(n\right)-Z_{p}\right)^{+} & > & 1.
\end{eqnarray*}
We impose an analogous constraint on the couplings provided in this
paper, but now we will point out some differences between these couplings
and the coupling conjectured by Arratia. First, we drop the requirement
that $N\left(n\right)\in\left[n\right];$ rather, from now on we let
$N\left(n\right)$ denote a uniform variable in a combinatorial structure
over $\left[n\right]$ (these structures are defined in $\S$1.1).
Instead of a prime power process $\left(C_{p}\left(n\right)\right)_{p\le n}$,
we consider a component counting process $\left(C_{i}\left(n\right)\right)_{1\le i\le n}$
(here $i$ is any positive integer less than or equal $n$) of $N\left(n\right)$
which satisfies $\sum_{i\le n}iC_{i}\left(n\right)=n$ \textendash{}
the latter equation is not always true for the prime power process
$\left(C_{p}\left(n\right)\right)_{p\le n}$ of a uniformly distributed
variable over $\left[n\right]$. In Arratia's conjecture, there is
a natural candidate for $M\left(n\right)$ since the prime power process
$\left(C_{p}\left(n\right)\right)_{p\le n}$ converges in distribution
to the process $\left(Z_{p}\right)_{p\le n}$ described above. However,
in each of the examples considered in this paper, we take advantage
of the fact that in either an assembly, multiset, or selection, there
exists infinitely many processes $\left(Z_{i}\left(n,x\right)\right)_{i\le n}$,
indexed by some positive real parameter $x$, consisting of independent
variables $Z_{i}\left(n,x\right),$ which furnish natural candidates
for a random variable $M\left(n,x\right)$ to be compared with a uniform
random variable $N\left(n\right)$ (see equation (\ref{eq:1}) below). 

The combinatorial structures listed in $\S$1.1 provide the frameworks
in which we obtain our couplings. Theorem 1, the main result of this
paper, is stated in $\S$1.2. In $\S$2, we describe how our constraints
force a significant proportion of the entries of a prospective joint
mass distribution of our variables to be $0$. In $\S$3, we introduce
the notion of pivot mass, which depends on the constraints placed
on the desired joint distribution. Some properties of the pivot mass
are proved in $\S$3 and $\S$4. In $\S$5, we apply results on the
pivot mass and a theorem proved by Strassen to prove Theorem 1, thereby
proving the existence of our couplings.

\subsection{Three Major Combinatorial Structures}

All couplings constructed in this paper involve a discrete uniform
random variable in any one of the following three combinatorial classes.
An \textbf{assembly} $A_{n}$ is an example of a combinatorial structure
in which the set $\left[n\right]$ is partitioned into blocks and
for each block of size $i$ one of $m_{i}$ possible structures is
chosen. An example of an assembly is the collection of set partitions
of $\left[n\right]$, in which case $m_{i}=1$ for $i\le n$ (since
the order of the elements in a particular block is irrelevant \textendash{}
i.e., once $i$ numbers $n_{1},\ldots,n_{i}\in\left[n\right]$ are
chosen and placed in a box of size $i$, there is a unique block consisting
of these $i$ elements). Moreover, for set partitions of $\left[n\right],$
we have $\#A_{n}=B_{n},$ the $n$th Bell number. Another example
of an assembly is the set $S_{n}$ of permutations of $\left[n\right]$,
in which case $m_{i}=\left(i-1\right)!$ (since there are $\left(i-1\right)!$
distinct cycles of length $i$ among $i$ chosen numbers $n_{1},\ldots,n_{i}\in\left[n\right]$)
for $i\le n$. Further, for permutations of $\left[n\right],$ we
have $\#A_{n}=n!$. A \textbf{multiset }$A_{n}$ is a pair $\left(\left[n\right],m\right),$
where $m:A\to\mathbb{N}$ is a function that gives the multiplicity
$m\left(a\right)$ of each element $a\in\left[n\right]$. Equivalently
(see Meta-example 2.2 of $\S$2.2 of \cite{key-2}), the integer $n$
is partitioned into parts, and for each part of size $i$, one of
the $m_{i}$ objects of weight $i$ is chosen. In the example of integer
partitions of a positive integer $n$, we have $m_{i}=1$ (for each
part of size $i$, we have only $m_{i}=1$ choice for the size of
$i$) for $1\le i\le n$. When $A_{n}$ is the set of integer partitions
of $n$, we have $\#A_{n}=p\left(n\right),$ where $p$ is the integer
partition function.\textbf{ Selections} are similar to multisets,
but now we require all parts to be distinct. An example of a selection
is the set of all integer partitions of a positive integer $n$ with
distinct parts. In the case of integer partitions with distinct parts,
we have $\#A_{n}=q\left(n\right),$ where $q$ is the integer partition
function with distinct parts. To simplify the notation, let us define
$k_{n}\coloneqq\#A_{n}$ for each of these structures.

These three structures are characterized by the following generating
relations between $k_{n}$ and $m_{i}$. Assemblies are characterized
by 
\begin{eqnarray*}
\sum_{n\ge0}\frac{\left(k_{n}\right)z^{n}}{n!} & = & \exp\left(\sum_{i\ge1}\frac{m_{i}z^{i}}{i!}\right),
\end{eqnarray*}
multisets are characterized by 
\begin{eqnarray*}
\sum_{n\ge0}\left(k_{n}\right)z^{n} & = & \prod_{i\ge1}\left(1-z^{i}\right)^{-m_{i}},
\end{eqnarray*}
and selections are characterized by 
\begin{eqnarray*}
\sum_{n\ge0}\left(k_{n}\right)z^{n} & = & \prod_{i\ge1}\left(1+z^{i}\right)^{m_{i}}
\end{eqnarray*}
($\S$2.2 of \cite{key-2}). Revisiting the example of an assembly
in which $A_{n}$ denotes the set of all set partitions of $\left[n\right]$
(so that $m_{i}=1$ for $1\le i\le n$), it is known (e.g., pp. 20-23
of \cite{key-5}) that the $n$th Bell number $B_{n}$ satisfies the
generating equation $\sum_{n\ge0}\frac{B_{n}}{n!}z^{n}=\text{exp}\left(e^{z}-1\right)$,
and the right hand side may be expressed as $\exp\left(\sum_{i\ge1}\frac{z^{i}}{i!}\right)$.

\subsection{Couplings of Random Variables}

In each of the assembly, multiset, and selection settings, our methods
of arriving at our desired couplings are similar. We start by considering
$N\left(n\right)\sim\text{Unif}\left(A_{n}\right)$. Given $i\le n$,
if we denote by $C_{i}\left(n\right)$ the number components of $N\left(n\right)$
of size $i$, then $0\le C_{i}\left(n\right)\le n$ and $\sum_{i\le n}iC_{i}\left(n\right)=n$.
In particular, the variables $C_{i}\left(n\right),i\le n$, are dependent
and their distributions are determined by the uniform variable $N\left(n\right)$.
The process $\left(C_{i}\left(n\right)\right)_{i\le n}=\left(C_{1}\left(n\right),\ldots,C_{n}\left(n\right)\right)$
is called the \textbf{component counting process }of $N\left(n\right)$.
\begin{example}
In the example $A_{n}=S_{n},$ the term $C_{i}\left(n\right)$ is
the number of cycles of $N\left(n\right)$ of length $i$, and $\left(C_{1}\left(n\right),\ldots,C_{n}\left(n\right)\right)$
is often referred to as the \textit{cycle type} of $N\left(n\right)$.
In the example for which $A_{n}$ is the collection of set partitions
of $\left[n\right],$ $C_{i}\left(n\right)$ is the number of blocks
of $N\left(n\right)$ of size $i$. In the example for which $A_{n}$
is the set of integer partitions of $n$, $C_{i}\left(n\right)$ is
the number of $i'$s in the integer partition $N\left(n\right)$ of
$n$.
\end{example}
In each of these combinatorial settings, there exists an infinite
family $\left(\left(Z_{i}\left(n,x\right)\right)_{i\le n}\right)_{x}$,
parametrized by positive values of $x$ (specifically, $x>0$ for
assemblies, $x\in\left(0,1\right)$ for multisets, and $x\in\left(0,\infty\right)$
for selections) of infinite sequences $\left(Z_{i}\left(n,x\right)\right)_{i\le n}$
of nonnegative integer-valued independent random variables $Z_{i}\left(n,x\right)$
for which 
\begin{equation}
\mathcal{L}\left(C_{1}\left(n\right),\ldots,C_{n}\left(n\right)\right)=\mathcal{L}\left(Z_{1}\left(n,x\right),\ldots,Z_{n}\left(n,x\right)\Bigg\vert\sum_{i\le n}iZ_{i}\left(n,x\right)=n\right)\label{eq:1}
\end{equation}

\noindent ($\S$2.3 of \cite{key-2}). Equation (\ref{eq:1}) states
that the probability that the vector $\left(C_{1}\left(n\right),\ldots,C_{n}\left(n\right)\right)$
belongs to some region $\Gamma\in\mathbb{R}^{n}$ (where $\Gamma$
is an element of the $n$-fold direct product $\prod_{i\le n}\mathcal{B}\left(\mathbb{R}\right)$
of the Borel $\sigma-$algebra on $\mathbb{R}$) is the same as the
conditional probability that $\left(Z_{1}\left(n,x\right),\ldots,Z_{n}\left(n,x\right)\right)$
belongs to $\Gamma$ if we condition on the event $\left\{ \sum_{i\le n}iZ_{i}\left(n,x\right)=n\right\} $.
For a fixed $x,$ we consider another random variable $M\left(n,x\right)$
whose component counting process\footnote{For fixed $x$, since the variables $Z_{i}\left(n,x\right),i\le n,$
are independent, it is not always true that $\sum_{i\le n}iZ_{i}\left(n,x\right)=n$.
Therefore, the variable $M\left(n,x\right)$ does not always correspond
to an element of $A_{n}$.} is given by $\left(Z_{i}\left(n,x\right)\right)_{i\le n}$, so the
distribution of $M\left(n,x\right)$ is determined by the independent
process $\left(Z_{i}\left(n,x\right)\right)_{i\le n}$. The main result
of this paper is the following theorem.\footnote{To simplify the notation, we will sometimes (Figure 2, Theorem 2,
and $\S\S$ 4-5) replace $Z_{i}\left(n,x\right)$ with $Z_{i}$, replace
$C_{i}\left(n\right)$ with $C_{i}$, replace $N\left(n\right)$ with
$N$, and replace $M\left(n,x\right)$ with $M$.}
\begin{thm}
Let $n\in\mathbb{N}$ and suppose $A_{n}$ denotes an assembly, multiset,
or a selection among elements of $\left[n\right]$. Given $N\left(n\right)\sim\text{Unif}\left(A_{n}\right)$
with component counting process $\left(C_{i}\left(n\right)\right)_{i\le n}$,
there exists a positive real number $x\left(n\right)$ for which,
when $x>x\left(n\right)$, there exists a process $\left(Z_{i}\left(n,x\right)\right)_{i\le n}$
of non-negative independent random variables satisfying $\left(1\right)$
such that we can couple $M\left(n,x\right)$ and $N\left(n\right)$
with 
\begin{equation}
\sum_{i\le n}\left(C_{i}\left(n\right)-Z_{i}\left(n,x\right)\right)^{+}\le1.\label{eq:2}
\end{equation}
\end{thm}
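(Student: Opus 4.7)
The plan is to recast the sought coupling as an instance of Strassen's theorem and then to verify its marginal inequality by means of the pivot mass developed in $\S 3$--$\S 4$. Since the bound (\ref{eq:2}) depends on $N\left(n\right)$ and $M\left(n,x\right)$ only through their component vectors, the problem reduces to constructing a joint distribution on pairs of vectors $\left(\vec c,\vec z\right)\in\mathbb{Z}_{\ge 0}^{n}\times\mathbb{Z}_{\ge 0}^{n}$ whose marginals coincide with the laws of $\left(C_{i}\left(n\right)\right)_{i\le n}$ and $\left(Z_{i}\left(n,x\right)\right)_{i\le n}$, respectively, and which is supported on
\[
S\;\coloneqq\;\left\{\left(\vec c,\vec z\right):\sum_{i\le n}\left(c_{i}-z_{i}\right)^{+}\le 1\right\}.
\]
Once the component vectors have been coupled, the uniform element of the fiber of $N\left(n\right)$ above each admissible $\vec c$ lifts the coupling back to the original variables while preserving the bound.

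First I would identify, in each of the three settings, the explicit independent process $\left(Z_{i}\left(n,x\right)\right)_{i\le n}$ verifying (\ref{eq:1}) \textemdash{} the standard Poisson, negative binomial, and Bernoulli-sum laws read off the generating relations of $\S 1.1$. Strassen's theorem then asserts that a coupling supported in $S$ with the prescribed marginals exists iff
\[
\mathbb{P}\bigl(\left(C_{i}\left(n\right)\right)_{i\le n}\in\mathcal{B}\bigr)\;\le\;\mathbb{P}\bigl(\left(Z_{i}\left(n,x\right)\right)_{i\le n}\in S\left(\mathcal{B}\right)\bigr)
\]
for every collection $\mathcal{B}$ of admissible component vectors, where $S\left(\mathcal{B}\right)=\bigcup_{\vec c\in\mathcal{B}}\left\{\vec z:\left(\vec c,\vec z\right)\in S\right\}$. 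The single-point fibre $S\left(\left\{\vec c\right\}\right)$ decomposes into the upper box $\left\{\vec z\ge\vec c\right\}$ together with, for each coordinate $j$ with $c_{j}\ge 1$, the slab on which $z_{j}=c_{j}-1$ and $z_{i}\ge c_{i}$ for $i\ne j$; this decomposition is precisely the structural hook to which the pivot mass of $\S 3$ is fitted.

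Using the observation of $\S 2$ that (\ref{eq:2}) forces $\mathbb{P}\left(M\left(n,x\right)=\vec z,\,N\left(n\right)=\vec c\right)=0$ for all pairs outside $S$, together with the pivot-mass estimates of $\S\S 3$--$4$, I would collapse Strassen's continuum of inequalities into one comparison between the uniform mass of $N\left(n\right)$ and a pivot quantity that summarises the marginal mass of $M\left(n,x\right)$ on the admissible fibres. Because raising $x$ stochastically enlarges each $Z_{i}\left(n,x\right)$, it inflates the upper-box probabilities $\mathbb{P}\left(Z_{i}\left(n,x\right)\ge c_{i}\right)$ that dominate the right-hand side, so the pivot mass is monotone in $x$ in the favourable direction.

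The main obstacle, and the content of $\S 5$, is exhibiting a single threshold $x\left(n\right)$ above which the pivot-mass inequality holds uniformly over all sets $\mathcal{B}$ in the Strassen criterion. I expect this to be handled by showing that the pivot mass dominates the normalising constant $k_{n}$ of the structure once $x$ is large, with $x\left(n\right)$ defined as the infimum of the parameters for which the bound holds; the generating relations of $\S 1.1$ then ensure that $x\left(n\right)$ is finite in each of the assembly, multiset, and selection cases. Once this inequality is verified for a given $x>x\left(n\right)$, Strassen's theorem yields the required joint law supported in $S$, and hence the coupling satisfying (\ref{eq:2}).
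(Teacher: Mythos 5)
Your overall strategy is the same as the paper's: translate the constraint (\ref{eq:2}) into a support condition for a joint law, invoke Strassen's theorem (here Theorem 4), and reduce the Strassen marginal inequality to a statement about pivot mass that can be forced by sending the parameter $x$ to its limiting value in each combinatorial class. The decomposition of the fibre $S\left(\left\{\vec c\right\}\right)$ into the upper box plus the $n$ one-down slabs is exactly the inclusion--exclusion that produces the pivot-mass formula of Theorem 2, and the observation that raising $x$ stochastically enlarges each $Z_{i}\left(n,x\right)$ is correct, though the paper does not need monotonicity --- it only needs the limit statement $\mathcal{PM}_{\left(n,x\right)}\left(j\right)\to 0$, which is supplied column by column by Lemmas 1--3.

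The endgame, however, is where your sketch has real gaps. First, the key comparison is stated backwards: you say you expect to show that ``the pivot mass dominates the normalising constant $k_{n}$,'' but the Strassen inequality (after the paper's computation) is $\mathcal{PM}_{\left(n,x\right)}\left(L\left(n\right)\right)\le 1-\#L\left(n\right)/k_{n}$, so what must be shown is that the pivot mass is \emph{dominated by} $1/k_{n}$. Second, you do not explain how to pass from Strassen's continuum of inequalities over all subsets to something checkable. The paper does this with two observations you omit: (i) $\mathcal{PM}\left(L\left(n\right)\right)\le\mathcal{PM}\left(j\right)$ for any single column $j\in L\left(n\right)$, which together with the finiteness of the column set collapses the uniform estimate to finitely many single-column bounds that condition (\ref{eq:6}) then handles; and (ii) the boundary case where $L\left(n\right)$ exhausts all columns, so that the right-hand side of the Strassen inequality is exactly zero. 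Smallness of the pivot mass is not enough there: one needs the exact vanishing $\mathcal{PM}\left(\overrightarrow{e_{n}}\right)=0$ (the paper's Theorem 3, the $\left(\Leftarrow\right)$ direction), since every full collection of column labels contains $\overrightarrow{e_{n}}$. Without these two steps the ``single threshold $x\left(n\right)$'' you posit has no argument behind it, and the sketch does not establish that a finite threshold exists.

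A minor but genuine structural difference: you propose to apply Strassen to the component-vector marginals and then lift back to $\left(M\left(n,x\right),N\left(n\right)\right)$ by choosing $N\left(n\right)$ uniformly in the fibre above the coupled $\vec c$; this is a valid alternative, and it in fact sidesteps the paper's slightly awkward identification of columns of $T$ with repeated component labels. You would need to record, though, that the Strassen reference measure $\nu$ is then the push-forward of the uniform law to the vector space, so that $\nu\left(\mathcal{B}\right)$ is the weighted mass of $\mathcal{B}$ rather than $\#\mathcal{B}/k_{n}$; the argument still closes because $\nu\left(\mathcal{B}\right)<1$ whenever $\mathcal{B}$ is proper and the $\overrightarrow{e_{n}}$ observation again handles $\nu\left(\mathcal{B}\right)=1$.
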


\section{The Joint Mass Distribution of $\left(M\left(n,x\right),N\left(n\right)\right)$}

For some fixed value of $x$, if we are to successively construct
a joint probability mass function $p\left(\cdot,\cdot\right)$ with
marginal distributions corresponding to $M\left(n,x\right)$ and $N\left(n\right)$
for which inequality (\ref{eq:2}) holds, we must ensure that $\mathbb{P}\left(M\left(n,x\right)=\cdot,N\left(n\right)=\cdot\right)=0$
when 
\begin{eqnarray*}
 & \sum_{i\le n}\left(C_{i}\left(n\right)-Z_{i}\left(n,x\right)\right)^{+}>1.
\end{eqnarray*}
We can index the joint distribution by using the range of $N\left(n\right)$
and the range of $M\left(n,x\right)$ for the column labels and row
labels, respectively. In particular, we can label the columns with
the range of $\left(C_{i}\left(n\right)\right)_{i\le n}$ in lexicographic
order. Since we have infinitely many row labels, for each $m\in\mathbb{Z}_{\ge0},$
we apply the lexicographic ordering on all elements $\left(m_{1},\ldots,m_{n}\right)\in\left(\mathbb{Z}_{\ge0}\right)^{n}$
with $\sum_{i\le n}m_{i}=m$, starting with $m=0$ (we start with
$m=0$ since the $Z_{i}\left(n,x\right)$'s are non-negative). With
respect to this ordering, we will often enumerate the columns by $1,2,\ldots,k_{n}$
and the rows by $1,2,\ldots$. 

The following example shows that it is possible for several elements
of $A_{n}$ to have the same component process (hence the same column
label). Note that in the setting of Arratia's conjecture, it is impossible
for two columns to have the same label \textendash{} the uniqueness
of prime factorization in $\mathbb{N}$ ensures that each $\left(C_{p}\left(n\right)\right)_{p\le n}$
uniquely determines $N\left(n\right)$.
\begin{example}
Fix $n=3$ and consider the assembly $A_{3}=S_{3}$ of permutations
of $\left\{ 1,2,3\right\} $. The elements of $S_{3}$ are $1,\left(1\,2\right),\left(1\,3\right),\left(2\,3\right),\left(1\,2\,3\right),\left(1\,3\,2\right)$,
and their respective component counts are $\left(3,0,0\right),\left(1,1,0\right),\left(1,1,0\right),$

\noindent $\left(1,1,0\right),\left(0,0,1\right),\left(0,0,1\right)$.
For any $n\in\mathbb{N},$ Cauchy proved that there are $\frac{n!}{\left(i_{1}!\cdots i_{n!}1^{i_{1}}\cdots n^{i_{n}}\right)}$
permutations in $S_{n}$ with cycle type $\left(i_{1},\ldots,i_{n}\right)$,
so this gives the number of elements in $S_{n}$ with the component
counting process $\left(C_{i}\left(n\right)\right)_{i\le n}=\left(i_{1},\ldots,i_{n}\right)$.
\end{example}
For our purposes, when we have multiple columns with the same component
counting process, we enumerate these columns in any order. The reason
that we do not combine these into one column with larger probability
mass is due to the fact we are coupling $M\left(n,x\right)$ and $N\left(n\right)$
instead of coupling the two processes $\left(C_{i}\left(n\right)\right)_{i\le n}$
and $\left(Z_{i}\left(n,x\right)\right)_{i\le n}$ \textendash{} i.e.,
two columns with the same label correspond to different values of
$N\left(n\right)$. For the interested reader, equations $\left(2.2\right),\left(2.3\right),$
and $\left(2.4\right)$ in $\S$2.2 of \cite{key-2} give the number
of columns with a given column label $\left(a_{1},\ldots,a_{n}\right)$
for each of our combinatorial structures.

In each of these three settings, there are additional constraints
on any joint probability mass function of $M\left(n,x\right)$ and
$N\left(n\right)$ since the marginal distributions are known:
\begin{itemize}
\item The sum along column $N\left(n\right)=j,1\le j\le k_{n}$, is 
\begin{eqnarray*}
\sum_{a_{i}\ge0,1\le i\le n}\mathbb{P}\left(N\left(n\right)=j,\left(Z_{i}\left(n,x\right)\right)_{i\le n}=\left(a_{i}\right)_{i\le n}\right) & = & \mathbb{P}\left(N\left(n\right)=j\right)\\
 & = & 1/k_{n}.
\end{eqnarray*}
\item The sum along the row $M\left(n,x\right)=m$, $m\in\mathbb{N},$ labeled
$\left(Z_{i}\left(n,x\right)\right)_{i\le n}=\left(m_{i}\right)_{i\le n}$
is 
\begin{eqnarray*}
\sum\limits _{\substack{a_{i}\ge0,0\le i\le n,\\
\sum_{i\le n}ia_{i}=n
}
}\mathbb{P}\left(\left(C_{i}\left(n\right)\right)_{i\le n}=\left(a_{i}\right)_{i\le n},\left(Z_{i}\left(n,x\right)\right)_{i\le n}=\left(m_{i}\right)_{i\le n}\right) & = & \mathbb{P}\left(\left(Z_{i}\left(n,x\right)\right)_{i\le n}=\left(m_{i}\right)_{i\le n}\right)\\
 & = & \prod_{i\le n}\mathbb{P}\left(Z_{i}\left(n,x\right)=m_{i}\right),
\end{eqnarray*}
\end{itemize}
where the latest equation is due to the independence of the process
$\left(Z_{i}\left(n,x\right)\right)_{i\le n}$.

\section{Pivot Mass}

Given columns $j$ and $k$, with corresponding components $\left(C_{i}\left(n\right)\right)_{i\le n}$
and $\left(C_{i}'\left(n\right)\right)_{i\le n}$, we seek a way to
compare the corresponding sets of row labels in which column $j$
or $k$ must be $0$. Any of our desired couplings has the property
that column $\left(C_{i}\left(n\right)\right)_{i\le n}$ has a zero
in row $\left(Z_{i}\left(n,x\right)\right)_{i\le n}$ when (\ref{eq:2})
is violated, so we compare the probability measures of the sets $\left\{ \left(Z_{i}\left(n,x\right)\right)_{i\le n}:\sum_{i\le n}\left(C_{i}\left(n\right)-Z_{i}\left(n,x\right)\right)^{+}>1\right\} $
and $\left\{ \left(Z_{i}\left(n,x\right)\right)_{i\le n}:\sum_{i\le n}\left(C_{i}'\left(n\right)-Z_{i}\left(n,x\right)\right)^{+}>1\right\} $.
I.e., (\ref{eq:2}) is true for all of our desired couplings, so we
measure the probability that $M\left(n,x\right)$ takes on a value
$i$ for which column $j$ or $k$ has a required $0$ in row $i$.
This motivates the following definition.
\begin{defn*}
We call the pair $\left(i,j\right)$, corresponding to the $i$th
row label $\left(Z_{i}\left(x\right)\right)_{i\le n}$ and the $j$th
column label $\left(C_{i}\left(n\right)\right)_{i\le n}$, a \textbf{pivot}\textit{
}if $\sum_{i\le n}\left(C_{i}\left(n\right)-Z_{i}\left(x\right)\right)^{+}>1$.
Denote the set of all pivots by $P$. The \textbf{pivot mass} in column
$N\left(n\right)=j$ is defined as 
\begin{eqnarray*}
\mathcal{PM}\left(j\right) & =\mathcal{PM}_{\left(n,x\right)}\left(j\right) & \coloneqq\sum\limits _{\substack{i:\left(i,j\right)\in P}
}\mathbb{P}\left(M\left(n,x\right)=i\right).
\end{eqnarray*}
Given a subset $L\left(n\right)$ of column labels of $\left[n\right]$,
the pivot mass in $L\left(n\right)$ is defined as 
\begin{eqnarray*}
\mathcal{PM}\left(L\left(n\right)\right) & =\mathcal{PM}_{\left(n,x\right)}\left(L\left(n\right)\right) & \coloneqq\sum\limits _{\substack{i:\left(i,j\right)\in P\text{ }\\
\forall j\in L\left(n\right)
}
}\mathbb{P}\left(M\left(n,x\right)=i\right).
\end{eqnarray*}
\end{defn*}
\noindent If we define $I_{j}\coloneqq\left\{ \text{row labels }i:\left(i,j\right)\in P\right\} $,
$I_{L\left(n\right)}\coloneqq\left\{ \text{row labels }i:\left(i,j\right)\in P\text{ for all }j\in L\left(n\right)\right\} $
and let $\mathbb{P}_{M}$ denote the PMF of $M\left(n,x\right),$
then 
\[
\mathcal{PM}\left(j\right)=\mathbb{P}_{M}\left(I_{j}\right)
\]
and 
\[
\mathcal{PM}\left(L\left(n\right)\right)=\mathbb{P}_{M}\left(I_{L\left(n\right)}\right).
\]

\noindent Theorem 2 gives a formula for $\mathcal{PM}\left(j\right).$
Fortunately, due to the role of the parameter $x$, it is not necessary
to derive a formula for $\mathcal{PM}\left(L\left(n\right)\right)$
in order to prove Theorem 1. The fact that $\mathcal{PM}\left(L\left(n\right)\right)\le\mathcal{PM}\left(j\right)$
for any $j\in L\left(n\right)$ will be sufficient.

\begin{figure}[H]
\caption{If $\left(i_{0},j_{0}\right)$ is a pivot, then our desired joint
distribution table should have a $0$ in the $\left(i_{0},j_{0}\right)$
entry.}

\phantom{+}

$\phantom{-}$
\centering{}{\small{}}%
\begin{tabular}{|c|l|l|c|c|c|c|c|c|c|c|c|}
\cline{3-10} \cline{4-10} \cline{5-10} \cline{6-10} \cline{7-10} \cline{8-10} \cline{9-10} \cline{10-10} \cline{12-12} 
\multicolumn{1}{c}{} & {\small{}$N\left(n\right)$} & {\small{}$1$} & {\small{}$2$} & {\small{}$\cdots$} & {\small{}$j$} & {\small{}$\cdots$} & {\small{}$j_{0}$} & {\small{}$\cdots$} & {\small{}$k_{n}$} &  & {\small{}Row sum}\tabularnewline
\cline{3-10} \cline{4-10} \cline{5-10} \cline{6-10} \cline{7-10} \cline{8-10} \cline{9-10} \cline{10-10} \cline{12-12} 
\multicolumn{1}{c}{{\small{}$M\left(n,x\right)$}} & \multicolumn{1}{l}{} & \multicolumn{1}{l}{} & \multicolumn{1}{c}{} & \multicolumn{1}{c}{} & \multicolumn{1}{c}{} & \multicolumn{1}{c}{} & \multicolumn{1}{c}{} & \multicolumn{1}{c}{} & \multicolumn{1}{c}{} & \multicolumn{1}{c}{} & \multicolumn{1}{c}{}\tabularnewline
\cline{1-1} \cline{3-10} \cline{4-10} \cline{5-10} \cline{6-10} \cline{7-10} \cline{8-10} \cline{9-10} \cline{10-10} \cline{12-12} 
{\small{}$1$} &  &  &  &  &  &  &  &  &  &  & {\small{}$\mathbb{P}\left(M\left(n,x\right)=1\right)$}\tabularnewline
\cline{1-1} \cline{3-10} \cline{4-10} \cline{5-10} \cline{6-10} \cline{7-10} \cline{8-10} \cline{9-10} \cline{10-10} \cline{12-12} 
{\small{}$2$} &  &  &  &  &  &  &  &  &  &  & {\small{}$\mathbb{P}\left(M\left(n,x\right)=2\right)$}\tabularnewline
\cline{1-1} \cline{3-10} \cline{4-10} \cline{5-10} \cline{6-10} \cline{7-10} \cline{8-10} \cline{9-10} \cline{10-10} \cline{12-12} 
{\small{}$\vdots$} &  &  &  &  &  &  &  &  &  &  & {\small{}$\vdots$}\tabularnewline
\cline{1-1} \cline{3-10} \cline{4-10} \cline{5-10} \cline{6-10} \cline{7-10} \cline{8-10} \cline{9-10} \cline{10-10} \cline{12-12} 
{\small{}$i_{0}$} &  &  &  &  &  &  & {\small{}$0$} &  &  &  & {\small{}$\mathbb{P}\left(M\left(n,x\right)=i_{0}\right)$}\tabularnewline
\cline{1-1} \cline{3-10} \cline{4-10} \cline{5-10} \cline{6-10} \cline{7-10} \cline{8-10} \cline{9-10} \cline{10-10} \cline{12-12} 
{\small{}$\vdots$} &  &  &  &  &  &  &  &  &  &  & {\small{}$\vdots$}\tabularnewline
\cline{1-1} \cline{3-10} \cline{4-10} \cline{5-10} \cline{6-10} \cline{7-10} \cline{8-10} \cline{9-10} \cline{10-10} \cline{12-12} 
{\small{}$i$} &  &  &  &  & {\small{}$\mathbb{P}\left(M\left(n,x\right)=i,N\left(n\right)=j\right)$} &  &  &  &  &  & {\small{}$\mathbb{P}\left(M\left(n,x\right)=i\right)$}\tabularnewline
\cline{1-1} \cline{3-10} \cline{4-10} \cline{5-10} \cline{6-10} \cline{7-10} \cline{8-10} \cline{9-10} \cline{10-10} \cline{12-12} 
{\small{}$\vdots$} &  &  &  &  &  &  &  &  &  &  & {\small{}$\vdots$}\tabularnewline
\cline{1-1} \cline{3-10} \cline{4-10} \cline{5-10} \cline{6-10} \cline{7-10} \cline{8-10} \cline{9-10} \cline{10-10} \cline{12-12} 
\multicolumn{1}{c}{} & \multicolumn{1}{l}{} & \multicolumn{1}{l}{} & \multicolumn{1}{c}{} & \multicolumn{1}{c}{} & \multicolumn{1}{c}{} & \multicolumn{1}{c}{} & \multicolumn{1}{c}{} & \multicolumn{1}{c}{} & \multicolumn{1}{c}{} & \multicolumn{1}{c}{} & \multicolumn{1}{c}{}\tabularnewline
\cline{1-1} \cline{3-10} \cline{4-10} \cline{5-10} \cline{6-10} \cline{7-10} \cline{8-10} \cline{9-10} \cline{10-10} 
{\small{}Column sum} & {\small{}$\phantom{\bigg(}$} & {\tiny{}$\frac{1}{k_{n}}$} & {\tiny{}$\frac{1}{k_{n}}$} & {\tiny{}$\cdots$} & {\tiny{}$\frac{1}{k_{n}}$} & {\tiny{}$\cdots$} & {\tiny{}$\frac{1}{k_{n}}$} & {\tiny{}$\cdots$} & {\tiny{}$\frac{1}{k_{n}}$} & \multicolumn{1}{c}{} & \multicolumn{1}{c}{}\tabularnewline
\cline{1-1} \cline{3-10} \cline{4-10} \cline{5-10} \cline{6-10} \cline{7-10} \cline{8-10} \cline{9-10} \cline{10-10} 
\end{tabular}{\small\par}
\end{figure}

\begin{example}
\begin{singlespace}
\noindent Revisiting the example $A_{3}=S_{3}$, let us illustrate
some key features of a desired joint mass distribution of $\left(M\left(3,x\right),N\left(3\right)\right)$.
\begin{figure}[H]
\caption{A desired coupling of $M\left(3,x\right)$ and $N\left(3\right)$
should have a zero at any location $\left(\left(Z_{i}\left(3,x\right)\right)_{i\le3},\left(C_{i}\left(3\right)\right)_{i\le3}\right)$
satisfying $\sum_{i\le3}\left(C_{i}\left(3\right)-Z_{i}\left(3,x\right)\right)^{+}>1$.}
$\phantom{-}$
\centering{}{\tiny{}}%
\begin{tabular}{|c|l|c|c|c|c|c|c|c|c|}
\cline{3-8} \cline{4-8} \cline{5-8} \cline{6-8} \cline{7-8} \cline{8-8} \cline{10-10} 
\multicolumn{1}{c}{} & {\tiny{}$N$} & {\tiny{}$\left(0,0,1\right)$} & {\tiny{}$\left(0,0,1\right)$} & {\tiny{}$\left(1,1,0\right)$} & {\tiny{}$\left(1,1,0\right)$} & {\tiny{}$\left(1,1,0\right)$} & {\tiny{}$\left(3,0,0\right)$} &  & {\tiny{}Row sum}\tabularnewline
\cline{3-8} \cline{4-8} \cline{5-8} \cline{6-8} \cline{7-8} \cline{8-8} \cline{10-10} 
\multicolumn{1}{c}{{\tiny{}$M$}} & \multicolumn{1}{l}{} & \multicolumn{1}{c}{} & \multicolumn{1}{c}{} & \multicolumn{1}{c}{} & \multicolumn{1}{c}{} & \multicolumn{1}{c}{} & \multicolumn{1}{c}{} & \multicolumn{1}{c}{} & \multicolumn{1}{c}{}\tabularnewline
\cline{1-1} \cline{3-8} \cline{4-8} \cline{5-8} \cline{6-8} \cline{7-8} \cline{8-8} \cline{10-10} 
{\tiny{}$\left(0,0,0\right)$} &  &  &  & {\tiny{}$0$} & {\tiny{}$0$} & {\tiny{}$0$} & {\tiny{}$0$} &  & {\tiny{}$\mathbb{P}\left(\left(Z_{i}\right)_{i\le3}=\left(0,0,0\right)\right)$}\tabularnewline
\cline{1-1} \cline{3-8} \cline{4-8} \cline{5-8} \cline{6-8} \cline{7-8} \cline{8-8} \cline{10-10} 
{\tiny{}$\left(0,0,1\right)$} &  &  &  & {\tiny{}$0$} & {\tiny{}$0$} & {\tiny{}$0$} & {\tiny{}$0$} &  & {\tiny{}$\mathbb{P}\left(\left(Z_{i}\right)_{i\le3}=\left(0,0,1\right)\right)$}\tabularnewline
\cline{1-1} \cline{3-8} \cline{4-8} \cline{5-8} \cline{6-8} \cline{7-8} \cline{8-8} \cline{10-10} 
{\tiny{}$\left(0,1,0\right)$} &  &  &  &  &  &  & {\tiny{}$0$} &  & {\tiny{}$\mathbb{P}\left(\left(Z_{i}\right)_{i\le3}=\left(0,1,0\right)\right)$}\tabularnewline
\cline{1-1} \cline{3-8} \cline{4-8} \cline{5-8} \cline{6-8} \cline{7-8} \cline{8-8} \cline{10-10} 
{\tiny{}$\left(1,0,0\right)$} &  &  &  &  &  &  & {\tiny{}$0$} &  & {\tiny{}$\mathbb{P}\left(\left(Z_{i}\right)_{i\le3}=\left(1,0,0\right)\right)$}\tabularnewline
\cline{1-1} \cline{3-8} \cline{4-8} \cline{5-8} \cline{6-8} \cline{7-8} \cline{8-8} \cline{10-10} 
{\tiny{}$\left(0,0,2\right)$} &  &  &  & {\tiny{}$0$} & {\tiny{}$0$} & {\tiny{}$0$} & {\tiny{}$0$} &  & {\tiny{}$\mathbb{P}\left(\left(Z_{i}\right)_{i\le3}=\left(0,0,2\right)\right)$}\tabularnewline
\cline{1-1} \cline{3-8} \cline{4-8} \cline{5-8} \cline{6-8} \cline{7-8} \cline{8-8} \cline{10-10} 
{\tiny{}$\left(0,1,1\right)$} &  &  &  &  &  &  & {\tiny{}$0$} &  & {\tiny{}$\mathbb{P}\left(\left(Z_{i}\right)_{i\le3}=\left(0,1,1\right)\right)$}\tabularnewline
\cline{1-1} \cline{3-8} \cline{4-8} \cline{5-8} \cline{6-8} \cline{7-8} \cline{8-8} \cline{10-10} 
{\tiny{}$\left(0,2,0\right)$} &  &  &  &  &  &  & {\tiny{}$0$} &  & {\tiny{}$\mathbb{P}\left(\left(Z_{i}\right)_{i\le3}=\left(0,2,0\right)\right)$}\tabularnewline
\cline{1-1} \cline{3-8} \cline{4-8} \cline{5-8} \cline{6-8} \cline{7-8} \cline{8-8} \cline{10-10} 
{\tiny{}$\left(1,0,1\right)$} &  &  &  &  &  &  & {\tiny{}$0$} &  & {\tiny{}$\mathbb{P}\left(\left(Z_{i}\right)_{i\le3}=\left(1,0,1\right)\right)$}\tabularnewline
\cline{1-1} \cline{3-8} \cline{4-8} \cline{5-8} \cline{6-8} \cline{7-8} \cline{8-8} \cline{10-10} 
{\tiny{}$\left(1,1,0\right)$} &  &  &  &  &  &  & {\tiny{}$0$} &  & {\tiny{}$\mathbb{P}\left(\left(Z_{i}\right)_{i\le3}=\left(1,1,0\right)\right)$}\tabularnewline
\cline{1-1} \cline{3-8} \cline{4-8} \cline{5-8} \cline{6-8} \cline{7-8} \cline{8-8} \cline{10-10} 
{\tiny{}$\left(2,0,0\right)$} &  &  &  &  &  &  &  &  & {\tiny{}$\mathbb{P}\left(\left(Z_{i}\right)_{i\le3}=\left(2,0,0\right)\right)$}\tabularnewline
\cline{1-1} \cline{3-8} \cline{4-8} \cline{5-8} \cline{6-8} \cline{7-8} \cline{8-8} \cline{10-10} 
{\tiny{}$\vdots$} &  &  &  &  &  &  &  &  & {\tiny{}$\vdots$}\tabularnewline
\cline{1-1} \cline{3-8} \cline{4-8} \cline{5-8} \cline{6-8} \cline{7-8} \cline{8-8} \cline{10-10} 
\multicolumn{1}{c}{} & \multicolumn{1}{l}{} & \multicolumn{1}{c}{} & \multicolumn{1}{c}{} & \multicolumn{1}{c}{} & \multicolumn{1}{c}{} & \multicolumn{1}{c}{} & \multicolumn{1}{c}{} & \multicolumn{1}{c}{} & \multicolumn{1}{c}{}\tabularnewline
\cline{1-1} \cline{3-8} \cline{4-8} \cline{5-8} \cline{6-8} \cline{7-8} \cline{8-8} 
{\tiny{}Column sum} & {\tiny{}$\phantom{\bigg(}$} & {\tiny{}$1/6$} & {\tiny{}$1/6$} & {\tiny{}$1/6$} & {\tiny{}$1/6$} & {\tiny{}$1/6$} & {\tiny{}$1/6$} & \multicolumn{1}{c}{} & \multicolumn{1}{c}{}\tabularnewline
\cline{1-1} \cline{3-8} \cline{4-8} \cline{5-8} \cline{6-8} \cline{7-8} \cline{8-8} 
{\tiny{}$\mathcal{PM}\left(N\right)$} &  & {\tiny{}$0$} & {\tiny{}$0$} & {\tiny{}$\mathbb{P}\left(Z_{1}=Z_{2}=0\right)$} & {\tiny{}$\mathbb{P}\left(Z_{1}=Z_{2}=0\right)$} & {\tiny{}$\mathbb{P}\left(Z_{1}=Z_{2}=0\right)$} & {\tiny{}$\mathbb{P}\left(Z_{1}\le1\right)$} & \multicolumn{1}{c}{} & \multicolumn{1}{c}{}\tabularnewline
\cline{1-1} \cline{3-8} \cline{4-8} \cline{5-8} \cline{6-8} \cline{7-8} \cline{8-8} 
\end{tabular}{\tiny\par}
\end{figure}
\end{singlespace}

\noindent Each column with a pivot contains infinitely many pivots.
E.g., in Figure 2, column $\left(3,0,0\right)$ has a pivot in any
row of the form $\left(a,b,c\right)$ with $a\in\left\{ 0,1\right\} ,b,c\ge0$.
Columns labeled $\left(1,1,0\right)$ have a pivot in any row of the
form $\left(0,0,l\right)$ for any $l\in\mathbb{Z}_{\ge0}$. Moreover,
note that the independence of the process $\left(Z_{i}\left(3,x\right)\right)_{i\le3}$
allows us to distribute $\mathbb{P}$ through the parentheses in the
row sums and pivot mass expressions. The actual value of the row sum
and pivot masses depends on the choice of the process $\left(Z_{i}\left(3,x\right)\right)_{i\le3}$.
In $\S$4, we mention several choices for such processes $\left(Z_{i}\left(3,x\right)\right)_{i\le3}$
which will satisfy equation (\ref{eq:1}).
\end{example}
The following theorem plays a key role in the proof of Theorem 1.\footnote{When Theorem 2 is applied in $\S$4, additional indicator functions
will be included to remind us that $\mathbb{P}\left(Z_{i}\left(n,x\right)\le k\right)=0$
if $k<0$.} For convenience, in the proof of the following theorem, we simplify
the notation by writing $C_{i}\left(n\right)=C_{i}$, $Z_{i}\left(n,x\right)=Z_{i}$,
$M\left(n,x\right)=M$ and $N\left(n\right)=N$. Moreover, the notion
of pivot mass introduced in this section may be generalized; in a
particular setting, one should define pivot mass based on the constraints
required of their desired coupling. It is both a combinatorial and
probabilistic object since it is a sum of probability masses indexed
by the counting constraint (\ref{eq:2}).
\begin{thm}
(Pivot Mass Formula for 1 Column) Consider a fixed column label $N\left(n\right)\in A_{n}$
and denote its component counting process by $\left(C_{i}\left(n\right)\right)_{i\le n}$.
Its pivot mass is 
\begin{eqnarray*}
\mathcal{PM}\left(N\left(n\right)\right) & = & 1-\sum_{j\le n}\left(1_{\left\{ C_{j}>0\right\} }\left(1-\mathbb{P}\left(Z_{j}\le C_{j}-2\right)\right)\prod\limits _{\substack{i\not=j,\\
i\le n
}
}\left(1-\mathbb{P}\left(Z_{i}\le C_{i}-1\right)\right)\right)\\
 &  & +\left(\sum_{i\le n}1_{\left\{ C_{i}>0\right\} }-1\right)\prod_{i\le n}\left(1-\mathbb{P}\left(Z_{i}\le C_{i}-1\right)\right).
\end{eqnarray*}
\end{thm}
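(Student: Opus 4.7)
The plan is to translate the pivot-mass into a complementary event that partitions cleanly by independence. Since $\mathcal{PM}(N(n)) = \mathbb{P}_M(I_{N(n)}) = \mathbb{P}\bigl(\sum_{i\le n}(C_i - Z_i)^+ > 1\bigr)$, I would first write
\[
\mathcal{PM}(N(n)) \;=\; 1 \;-\; \mathbb{P}\!\left(\sum_{i\le n}(C_i - Z_i)^+ = 0\right) \;-\; \mathbb{P}\!\left(\sum_{i\le n}(C_i - Z_i)^+ = 1\right),
\]
since the $C_i$ are fixed constants (we are considering a specific column label) and the two events on the right are disjoint and exhaust the complement of the pivot set.

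Next I would handle each of the two probabilities. The event $\{\sum_i(C_i - Z_i)^+ = 0\}$ is precisely $\bigcap_{i\le n}\{Z_i \ge C_i\}$, so independence of the $Z_i(n,x)$'s gives
\[
\mathbb{P}\!\left(\sum_{i\le n}(C_i - Z_i)^+ = 0\right) \;=\; \prod_{i\le n}\bigl(1 - \mathbb{P}(Z_i \le C_i - 1)\bigr).
\]
For the second event, exactly one coordinate contributes: $\{\sum_i(C_i - Z_i)^+ = 1\}$ is the disjoint union, over $j\le n$, of the events $B_j := \{Z_j = C_j - 1\} \cap \bigcap_{i\ne j}\{Z_i \ge C_i\}$. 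The event $B_j$ is empty unless $C_j \ge 1$, which introduces the factor $1_{\{C_j > 0\}}$. By independence,
\[
\mathbb{P}\!\left(\sum_{i\le n}(C_i - Z_i)^+ = 1\right) \;=\; \sum_{j\le n} 1_{\{C_j > 0\}} \mathbb{P}(Z_j = C_j - 1)\prod_{\substack{i\ne j\\ i\le n}}\bigl(1 - \mathbb{P}(Z_i \le C_i - 1)\bigr).
\]

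The final step is the algebraic identity that matches the stated form. Using
\[
\mathbb{P}(Z_j = C_j - 1) \;=\; \bigl(1 - \mathbb{P}(Z_j \le C_j - 2)\bigr) \;-\; \bigl(1 - \mathbb{P}(Z_j \le C_j - 1)\bigr),
\]
each summand splits into a piece that contributes to the first sum in the claimed formula, and a piece that, after pulling $1 - \mathbb{P}(Z_j \le C_j - 1)$ back into the product over $i\ne j$, completes it to a product over all $i\le n$. Collecting terms, the summed contribution from the second piece is $-\bigl(\sum_{j\le n} 1_{\{C_j>0\}}\bigr)\prod_{i\le n}(1 - \mathbb{P}(Z_i \le C_i-1))$, which when combined with the $\prod_{i\le n}(1 - \mathbb{P}(Z_i \le C_i-1))$ coming from the $=0$ case yields the coefficient $\bigl(\sum_i 1_{\{C_i > 0\}} - 1\bigr)$ displayed in the theorem. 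Substituting back into the complement formula gives the claim.

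I do not expect any real obstacle: the result is essentially a bookkeeping identity. The only subtlety worth highlighting is the handling of the indicator $1_{\{C_j > 0\}}$ together with the convention $\mathbb{P}(Z_i \le k) = 0$ for $k < 0$ alluded to in the footnote, so that when $C_j = 0$ every factor $1 - \mathbb{P}(Z_j \le C_j - 1)$ evaluates to $1$ and no spurious contribution arises from the indices with zero component count. Writing the computation in the order above ensures this convention is used only inside the product, never inside a $\mathbb{P}(Z_j = C_j - 1)$ term.
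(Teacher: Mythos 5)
Your argument is correct and produces the theorem's formula. The overall strategy coincides with the paper's (write $\mathcal{PM}(N) = 1 - \mathbb{P}(\text{no pivot})$, exploit independence of the $Z_i$, and rewrite survival probabilities as $1-\mathbb{P}(Z_i\le\cdot)$), but you decompose the non-pivot event differently. The paper expresses it as a union $\bigcup_{j:C_j>0} E_j$ with $E_j=\{Z_j\ge C_j-1\}\cap\bigcap_{i\ne j}\{Z_i\ge C_i\}$, notes that summing $\mathbb{P}(E_j)$ overcounts the common subevent $\{Z_i\ge C_i\ \forall i\}$ exactly $\sum_i 1_{\{C_i>0\}}$ times, and subtracts $\bigl(\sum_i 1_{\{C_i>0\}}-1\bigr)\prod_i\bigl(1-\mathbb{P}(Z_i\le C_i-1)\bigr)$ to compensate. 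You instead partition the complement directly into the disjoint events $\bigl\{\sum_i(C_i-Z_i)^+=0\bigr\}$ and $\bigl\{\sum_i(C_i-Z_i)^+=1\bigr\}$ (the latter further split into the disjoint $B_j$), evaluate each by independence, and recover the coefficient $\sum_i 1_{\{C_i>0\}}-1$ through the identity $\mathbb{P}(Z_j=C_j-1)=\bigl(1-\mathbb{P}(Z_j\le C_j-2)\bigr)-\bigl(1-\mathbb{P}(Z_j\le C_j-1)\bigr)$. The two routes are equivalent (indeed $E_j$ is the disjoint union of your $B_j$ with $\{Z_i\ge C_i\ \forall i\}$, which is why the paper's correction term works), but yours sidesteps the informal overcounting argument: the disjointness is manifest, so no compensation step needs to be justified. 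The paper's version is slightly more compact to state, at the cost of the reader having to verify that all pairwise (and higher) intersections of the $E_j$ collapse to the same event. Your handling of the indicator $1_{\{C_j>0\}}$ and the convention $\mathbb{P}(Z_j\le k)=0$ for $k<0$ is also consistent with the paper's footnote and introduces no spurious contributions.
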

\begin{proof}
Given $1\le j\le n$, let $\overrightarrow{e_{j}}$ denote the row
vector of length $n$ whose $j$th entry is $1$ and whose other entries
are $0$. Given two vectors $\left(a_{i}\right)_{i\le n},\left(b_{i}\right)_{i\le n}$
in $\mathbb{R}^{n},$ we write $\left(a_{i}\right)_{i\le n}\le\left(b_{i}\right)_{i\le n}$
if $a_{i}\le b_{i}$ for each $i\le n$. Since $\sum_{k=1}^{\infty}\mathbb{P}\left(M\left(n,x\right)=k\right)=1$,
we have 
\begin{equation}
\mathcal{PM}\left(N\right)=1-\sum_{k:\left(k,N\right)\not\in P}\mathbb{P}\left(M=k\right).\label{eq:3}
\end{equation}
We have the event equality 
\begin{eqnarray*}
\left\{ \left(M,N\right)\not\in P\right\}  & = & \left\{ \exists j\le n:\left(Z_{i}\right)_{i\le n}\ge\left(C_{i}\right)_{i\le n}-\overrightarrow{e_{j}}\cdot1_{\left\{ C_{j}>0\right\} }\right\} 
\end{eqnarray*}
since the pair $\left(M,N\right)$ is a not pivot if and only if $Z_{i}\ge C_{i}$
for all $i$ except possibly one value $j$ with $Z_{j}=C_{j}-1$.
Since each $Z_{i},1\le i\le n$ is nonnegative, we can only have $Z_{j}=C_{j}-1$
when $C_{j}>0$. Note that if $Z_{i}\ge C_{i}$ for all $i$, then
any $j$ satisfies $\left(Z_{i}\right)_{i\le n}\ge\left(C_{i}\right)_{i\le n}-\overrightarrow{e_{j}}\cdot1_{\left\{ C_{j}>0\right\} }$.
On the other hand, if there exists a value $j$ for which $Z_{j}=C_{j}-1$
and $Z_{i}\ge C_{i}$ for all $i\not=j$, then $\left(Z_{i}\right)_{i\le n}\ge\left(C_{i}\right)_{i\le n}-\overrightarrow{e_{j}}\cdot1_{\left\{ C_{j}>0\right\} }$.
Therefore, the right hand side of equation (\ref{eq:3}) is
\begin{equation}
1-\sum_{k:\left(k,N\right)\not\in P}\mathbb{P}\left(M=k\right)=1-\mathbb{P}\left(\exists j\le n:\left(Z_{i}\right)_{i\le n}\ge\left(C_{i}\right)_{i\le n}-\overrightarrow{e_{j}}\cdot1_{\left\{ C_{j}>0\right\} }\right).\label{eq:4}
\end{equation}
We rewrite the probability $\mathbb{P}\left(\exists j\le n:\left(Z_{i}\right)_{i\le n}\ge\left(C_{i}\right)_{i\le n}-\overrightarrow{e_{j}}\cdot1_{\left\{ C_{j}>0\right\} }\right)$
by applying an inclusion-exclusion argument. Corresponding to any
$j\le n$ with $C_{j}>0,Z_{j}\ge C_{j}-1,$ and $Z_{i}\ge C_{i}$
for $i\not=j$, we add the term $\mathbb{P}\left(Z_{j}\ge C_{j}-1,\text{ and }Z_{i}\ge C_{i}\text{ for all }i\not=j\right)$.
As a result, we have added those elements with $Z_{i}\ge C_{i}$ for
all $i$ a total of $\sum_{i=1}^{n}1_{\left\{ C_{i}>0\right\} }$
many times. Therefore, we compensate by subtracting the term $\left(\sum_{i=1}^{n}1_{\left\{ C_{i}>0\right\} }-1\right)\mathbb{P}\left(\left(Z_{i}\right)_{i\le n}\ge\left(C_{i}\right)_{i\le n}\right)$.
Further, applying independence of the process $\left(Z_{i}\right)_{i\le n}$,
we have 
\begin{eqnarray*}
\mathbb{P}\left(Z_{j}\ge C_{j}-1\text{ and }Z_{i}\ge C_{i}\text{ for all }i\not=j\right) & = & \mathbb{P}\left(Z_{j}\ge C_{j}-1\right)\mathbb{P}\left(Z_{i}\ge C_{i}\text{ for all }i\not=j\right)\\
 & = & \mathbb{P}\left(Z_{j}\ge C_{j}-1\right)\prod\limits _{\substack{i\not=j,\\
i\le n
}
}\mathbb{P}\left(Z_{i}\ge C_{i}\right)
\end{eqnarray*}
and 
\begin{eqnarray*}
\mathbb{P}\left(\left(Z_{i}\right)_{i\le n}\ge\left(C_{i}\right)_{i\le n}\right) & = & \prod_{i\le n}\mathbb{P}\left(Z_{i}\ge C_{i}\right).
\end{eqnarray*}
Thus, the right hand side of equation (\ref{eq:4}) becomes 
\begin{equation}
1-\sum_{j\le n}\left(1_{\left\{ C_{j}>0\right\} }\mathbb{P}\left(Z_{j}\ge C_{j}-1\right)\prod\limits _{\substack{i\not=j,\\
i\le n
}
}\mathbb{P}\left(Z_{i}\ge C_{i}\right)\right)+\left(\sum_{i\le n}1_{\left\{ C_{i}>0\right\} }-1\right)\prod_{i\le n}\mathbb{P}\left(Z_{i}\ge C_{i}\right).\label{eq:5}
\end{equation}
Using the fact that $\mathbb{P}\left(Z_{i}\ge a\right)=1-\mathbb{P}\left(\left(Z_{i}\le a-1\right)\right)$,
expression (\ref{eq:5}) becomes 

\begin{eqnarray*}
 &  & 1-\sum_{j\le n}\left(1_{\left\{ C_{j}>0\right\} }\left(1-\mathbb{P}\left(Z_{j}\le C_{j}-2\right)\right)\prod\limits _{\substack{i\not=j,\\
i\le n
}
}\left(1-\mathbb{P}\left(Z_{i}\le C_{i}-1\right)\right)\right)\\
 &  & +\left(\sum_{i\le n}1_{\left\{ C_{i}>0\right\} }-1\right)\prod_{i\le n}\left(1-\mathbb{P}\left(Z_{i}\le C_{i}-1\right)\right).
\end{eqnarray*}
\end{proof}
The following result shows that only columns with label $\left(C_{i}\left(n\right)\right)_{i\le n}=\overrightarrow{e_{n}}$
have zero pivot mass. In this paper, we will only apply the $\left(\Leftarrow\right)$
part of the statement.\footnote{Note that $\left(\Rightarrow\right)$ implies that each column label
other than $\overrightarrow{e_{n}}$ has pivots. Using equations $\left(2.2\right)-\left(2.4\right)$
in $\S$2.2 of $\left[2\right]$ (which give the number of columns
with label $\overrightarrow{e_{n}}$ in each of these combinatorial
settings) we can always determine the number of columns that contain
pivots.}
\begin{thm}
For any nonempty collection $L\left(n\right)$ of column labels, $\mathcal{PM}\left(L\left(n\right)\right)=0$
if and only if a column with label $\left(C_{i}\left(n\right)\right)_{i\le n}=\overrightarrow{e_{n}}$
belongs to $L\left(n\right)$. 
\end{thm}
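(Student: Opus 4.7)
The plan is to handle the two directions separately, using the constraint $\sum_{i\le n}iC_{i}\left(n\right)=n$ satisfied by every column label and using the non-negativity plus independence of the process $\left(Z_{i}\left(n,x\right)\right)_{i\le n}$.

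For the $\left(\Leftarrow\right)$ direction, I would fix a column $j_{0}\in L\left(n\right)$ with label $\overrightarrow{e_{n}}$, so $C_{n}=1$ and $C_{i}=0$ for $i<n$. For any row label $\left(Z_{i}\right)_{i\le n}$ of non-negative integers,
\[
\sum_{i\le n}\left(C_{i}-Z_{i}\right)^{+}=\left(1-Z_{n}\right)^{+}\le1,
\]
so no row label is a pivot against column $j_{0}$. Hence $I_{j_{0}}=\emptyset$, and since $I_{L\left(n\right)}\subseteq I_{j_{0}}$ we get $\mathcal{PM}\left(L\left(n\right)\right)=\mathbb{P}_{M}\left(I_{L\left(n\right)}\right)=0$.

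For the $\left(\Rightarrow\right)$ direction I would argue the contrapositive: assume $\overrightarrow{e_{n}}\notin L\left(n\right)$ and exhibit a single row label that is a pivot against every column of $L\left(n\right)$. The natural candidate is the zero row $\left(Z_{i}\right)_{i\le n}=\left(0,\ldots,0\right)$. For any column label $\left(C_{i}\right)_{i\le n}$ the sum $\sum_{i\le n}\left(C_{i}-0\right)^{+}=\sum_{i\le n}C_{i}$ is at least $1$ (since $\sum_{i\le n}iC_{i}=n\ge1$), and it equals exactly $1$ precisely when the only nonzero coordinate is $C_{n}=1$, i.e.\ when the label is $\overrightarrow{e_{n}}$. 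Thus for every column $j\in L\left(n\right)$ we have $\sum_{i\le n}C_{i}\ge2>1$, so the zero row is a pivot against every such $j$. Therefore the zero row belongs to $I_{L\left(n\right)}$, and by independence
\[
\mathcal{PM}\left(L\left(n\right)\right)\ge\mathbb{P}\left(Z_{1}=0,\ldots,Z_{n}=0\right)=\prod_{i\le n}\mathbb{P}\left(Z_{i}\left(n,x\right)=0\right)>0,
\]
where strict positivity follows from the explicit distributions of the $Z_{i}\left(n,x\right)$'s appearing in the three structures (Poisson in the assembly case, negative-binomial type in the multiset case, Bernoulli/binomial type in the selection case; all give strictly positive mass at $0$ for admissible $x$).

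The whole argument is quite short once one notices the parity observation that $\sum_{i\le n}iC_{i}=n$ forces $\sum_{i\le n}C_{i}=1$ to coincide with the label $\overrightarrow{e_{n}}$. The only mild care required, which I regard as the main (very minor) obstacle, is justifying $\mathbb{P}\left(Z_{i}\left(n,x\right)=0\right)>0$ uniformly across the three combinatorial regimes; this is handled case-by-case from the descriptions of the independent processes recalled in $\S$1.2 and $\S$4.
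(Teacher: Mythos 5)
Your proposal is correct and follows essentially the same route as the paper: the $\left(\Leftarrow\right)$ direction is identical, and for $\left(\Rightarrow\right)$ the paper likewise observes that $\overrightarrow{e_{n}}$ is the only label with $\sum_{i\le n}C_{i}\left(n\right)=1$, uses the all-zero row as a pivot common to every column of $L\left(n\right)$, and bounds $\mathcal{PM}\left(L\left(n\right)\right)$ below by $\mathbb{P}\left(Z_{i}\left(n,x\right)=0\ \forall i\le n\right)>0$. The only cosmetic difference is that the paper splits the non-$\overrightarrow{e_{n}}$ labels into the cases ``some $C_{j}\ge2$'' and ``two distinct nonzero coordinates,'' which your direct inequality $\sum_{i\le n}C_{i}\ge2$ subsumes.
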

\begin{proof}
$\left(\Leftarrow\right)$ Given any row label $\left(Z_{i}\left(n,x\right)\right)_{i\le n}$,
the vector $\left(C_{i}\left(n\right)\right)_{i\le n}=\overrightarrow{e_{n}}$
satisfies 
\begin{eqnarray*}
\sum_{i\le n}\left(C_{i}\left(n\right)-Z_{i}\left(n,x\right)\right)^{+} & = & \left(C_{n}\left(n\right)-Z_{n}\left(n,x\right)\right)^{+}\\
 & = & \left(1-Z_{n}\left(n,x\right)\right)^{+}\\
 & \le & 1.
\end{eqnarray*}

\noindent Thus, $\mathcal{PM}\left(\overrightarrow{e_{n}}\right)=0$.
Therefore, given $\overrightarrow{e_{n}}\in L\left(n\right)$, we
have 
\begin{eqnarray*}
\mathcal{PM}\left(L\left(n\right)\right) & \le & \mathcal{PM}\left(\overrightarrow{e_{n}}\right)\\
 & = & 0.
\end{eqnarray*}
$\left(\Rightarrow\right)$ Now suppose $\overrightarrow{e_{n}}\not\in L\left(n\right)$.
Recall that any column label $\left(C_{i}\left(n\right)\right)_{i\le n}$
satisfies $\sum_{i\le n}iC_{i}\left(n\right)=n.$ Since $\overrightarrow{e_{n}}$
is the only column label with $\sum_{i\le n}C_{i}\left(n\right)=1$,
this gives us one of two cases for each column label in $L\left(n\right)$.
Either (a) there exists some $j$ with $C_{j}\left(n\right)\ge2$
or (b) there exists distinct $j,k$ with $C_{j}\left(n\right)\ge1,C_{k}\left(n\right)\ge1$.
In case (a), using any row label $\left(Z_{i}\left(n,x\right)\right)_{i\le n}$
with $Z_{j}\left(n,x\right)=0$, we have 
\begin{eqnarray*}
\sum_{i\le n}\left(C_{i}\left(n\right)-Z_{i}\left(n,x\right)\right)^{+} & \ge & C_{j}\left(n\right)-Z_{j}\left(n,x\right)\\
 & \ge & 2.
\end{eqnarray*}
In case (b), we can take any $\left(Z_{i}\left(n,x\right)\right)_{i\le n}$
with $Z_{j}\left(n,x\right)=Z_{k}\left(n,x\right)=0$ to ensure that
\begin{eqnarray*}
\sum_{i\le n}\left(C_{i}\left(n\right)-Z_{i}\left(n,x\right)\right)^{+} & \ge & \left(C_{j}\left(n\right)-Z_{j}\left(n,x\right)\right)+\left(C_{k}\left(n\right)-Z_{k}\left(n,x\right)\right)\\
 & \ge & 2.
\end{eqnarray*}
Since we have just showed that each column label other than $\overrightarrow{e_{n}}$
has a pivot, we use the fact that each of these columns has a pivot
in the first row (labeled $\left(Z_{i}\left(n,x\right)\right)_{i\le n}=\left(0,0,\ldots,0\right)$).
Note that $\mathbb{P}\left(M\left(n,x\right)=i\right)>0$ for all
distributions in this paper (see $\S$4), so we have 
\begin{eqnarray*}
\mathcal{PM}\left(L\left(n\right)\right) & \ge & \mathbb{P}\left(Z_{i}\left(n,x\right)=0,\text{ }\forall i\le n\right)\\
 & = & \mathbb{P}\left(M\left(n,x\right)=1\right)\\
 & > & 0.
\end{eqnarray*}
\end{proof}

\section{Pivot Mass can be made Arbitrarily Small for Assemblies, Multisets,
and Selections}

The following condition on $\mathcal{PM}$ will be verified for our
three combinatorial structures:
\begin{equation}
\forall n\in\mathbb{N}\;\,\forall\varepsilon>0\;\,\exists x\left(n\right):x>x\left(n\right)\implies\ensuremath{\,\,\text{equation \eqref{eq:1}}\text{ holds and }\mathcal{PM}_{\left(n,x\right)}\left(\cdot\right)<\varepsilon.}\label{eq:6}
\end{equation}

\subsection{Assemblies}

In the assembly setting, we can take\footnote{Although the distribution of $Z_{i}\left(n,x\right)$ does not depend
on $n$, the choice the process $\left(Z_{i}\left(n,x\right)\right)_{i\le n}$
satisfying (\ref{eq:1}) does depend on $n$. I.e., if $\left(Z_{i}\left(n,x\right)\right)_{i\le n}$
and $\left(Z_{i}\left(n+1,x\right)\right)_{i\le n+1}$ equal $\left(C_{i}\left(n\right)\right)_{i\le n}$
and $\left(C_{i}\left(n+1\right)\right)_{i\le n+1},$ respectively,
conditional on the events $\left\{ \sum_{i\le n}iZ_{i}=n\right\} $
and $\left\{ \sum_{i\le n+1}iZ_{i}=n+1\right\} $, respectively, then
we need not have $\left(Z_{i}\left(n,x\right)\right)_{i\le n}=\left(Z_{i}\left(n+1,x\right)\right)_{i\le n}$.} $Z_{i}\left(n,x\right)\sim\text{Po}\left(\frac{m_{i}x^{i}}{i!}\right)$
for any $x>0$ to obtain equation (\ref{eq:1}) ($\S$2.3 of \cite{key-2}).
Recall that the CDF of a random variable $Z\sim\text{Po}\left(\lambda\right)$
is given by $\mathbb{P}\left(Z\le k\right)=\frac{\Gamma\left(\left\lfloor k+1\right\rfloor ,\lambda\right)}{\left\lfloor k\right\rfloor !}$
for $k\in\mathbb{Z}_{\ge0}$, where $\Gamma\left(a,b\right)$ is the
\textbf{upper incomplete gamma function} \textendash{} i.e., $\Gamma\left(a,b\right)=\int_{b}^{\infty}t^{a-1}e^{-t}dt$. 
\begin{lem}
For a fixed $a>0,$ we have $\lim_{b\to\infty}\Gamma\left(a,b\right)=0.$
\end{lem}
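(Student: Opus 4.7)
The plan is to bound the integrand above by a pure exponential and then integrate explicitly. Since exponential decay dominates polynomial growth, for fixed $a > 0$ the function $t \mapsto t^{a-1} e^{-t/2}$ is continuous on $[1,\infty)$ and tends to $0$ as $t \to \infty$, so it attains a finite supremum $C_a$ on $[1,\infty)$.

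With this bound in hand, for any $b \ge 1$ I would write $t^{a-1} e^{-t} = (t^{a-1} e^{-t/2}) \cdot e^{-t/2}$ and estimate
\[
\Gamma(a,b) \;=\; \int_b^\infty t^{a-1} e^{-t}\, dt \;\le\; C_a \int_b^\infty e^{-t/2}\, dt \;=\; 2 C_a\, e^{-b/2},
\]
which vanishes as $b \to \infty$.

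There is no substantive obstacle here; the lemma is a standard tail estimate for the gamma integral. An even shorter argument simply observes that $\Gamma(a) = \int_0^\infty t^{a-1} e^{-t}\, dt$ converges for $a > 0$, so $\Gamma(a,b) = \Gamma(a) - \int_0^b t^{a-1} e^{-t}\, dt$ must tend to $0$ by the very definition of the improper integral. I would prefer the explicit exponential bound above, however, since the rate $e^{-b/2}$ is likely to be the form needed in the next subsection when this lemma is invoked, via the CDF identity $\mathbb{P}(Z \le k) = \Gamma(\lfloor k+1 \rfloor, \lambda)/\lfloor k \rfloor!$, to drive the pivot mass $\mathcal{PM}_{(n,x)}(\cdot)$ below an arbitrary $\varepsilon$ as $x \to \infty$. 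The only minor point worth flagging in either proof is that for $0 < a < 1$ the integrand is unbounded near $t = 0$, but since we only let $b$ grow to infinity, restricting attention to $b \ge 1$ makes this irrelevant.
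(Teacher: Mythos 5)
Your proposal is correct, and your primary argument (splitting off $e^{-t/2}$, bounding $t^{a-1}e^{-t/2}$ by a constant $C_a$ on $[1,\infty)$, and integrating the leftover exponential) is a genuinely different route from the paper's. The paper uses precisely the ``even shorter argument'' you mention as an aside: since $\Gamma(a,0)=\Gamma(a)=\int_0^\infty t^{a-1}e^{-t}\,dt$ converges for $a>0$, one writes $\Gamma(a,b)=\Gamma(a)-\int_0^b t^{a-1}e^{-t}\,dt$ and lets $b\to\infty$, whence the difference tends to $\Gamma(a)-\Gamma(a)=0$ by the definition of the improper integral. Your exponential-bounding route buys a quantitative rate, $\Gamma(a,b)\le 2C_a e^{-b/2}$ for $b\ge 1$, while the paper's route is shorter and proves only the limit. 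Your instinct that a rate might be needed downstream is reasonable but turns out to be unnecessary: in $\S 4$ the paper only needs $\mathcal{PM}_{(n,x)}(\cdot)\to 0$ as $x\to\infty$ to verify condition (6), and the soft limit statement suffices. Your flag about $0<a<1$ (integrand unbounded near $t=0$) is well taken and is handled correctly by restricting to $b\ge 1$; the paper's version sidesteps this entirely since it never needs to bound the integrand pointwise.
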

\begin{proof}
Since $\Gamma\left(a,0\right)=\Gamma\left(a\right)$ is convergent
for $a>0$, we have 
\begin{eqnarray*}
\Gamma\left(a,b\right) & = & \Gamma\left(a\right)-\int_{0}^{b}t^{a-1}e^{-t}dt\\
 & \to & \Gamma\left(a\right)-\Gamma\left(a\right)\text{ as }b\to\infty\\
 & = & 0.
\end{eqnarray*}
\end{proof}
\noindent We can use Lemma 1 and take $x\to\infty$ to obtain 
\begin{equation}
\frac{\Gamma\left(C_{i},\frac{m_{i}x^{i}}{i!}\right)}{\left(C_{i}-1\right)!}\to0\text{ when }C_{i}>0\label{eq:7}
\end{equation}
and 
\begin{equation}
\frac{\Gamma\left(C_{j}-1,\frac{m_{j}x^{j}}{j!}\right)}{\left(C_{j}-2\right)!}\to0\text{ when }C_{j}>1.\label{eq:8}
\end{equation}
Therefore, Theorem 2 implies that $\mathcal{PM}\left(N\left(n\right)\right)$
equals 
\begin{eqnarray*}
 &  & 1-\sum_{j\le n}\left(1_{\left\{ C_{j}>0\right\} }\left(1-1_{\left\{ C_{j}>1\right\} }\frac{\Gamma\left(C_{j}-1,\frac{m_{j}x^{j}}{j!}\right)}{\left(C_{j}-2\right)!}\right)\prod\limits _{\substack{i\not=j,\\
i\le n
}
}\left(1-1_{\left\{ C_{i}>0\right\} }\frac{\Gamma\left(C_{i},\frac{m_{i}x^{i}}{i!}\right)}{\left(C_{i}-1\right)!}\right)\right)\\
 &  & +\left(\sum_{i\le n}1_{\left\{ C_{i}>0\right\} }-1\right)\prod_{i\le n}\left(1-1_{\left\{ C_{i}>0\right\} }\frac{\Gamma\left(C_{i},\frac{m_{i}x^{i}}{i!}\right)}{\left(C_{i}-1\right)!}\right).
\end{eqnarray*}
If we let $x\to\infty$, we can apply (\ref{eq:7}) and (\ref{eq:8})
to deduce that 
\begin{eqnarray*}
\mathcal{PM}\left(N\left(n\right)\right) & \to & 1-\sum_{j\le n}\left(1_{\left\{ C_{j}>0\right\} }\left(1-0\right)\prod\limits _{\substack{i\not=j,\\
i\le n
}
}\left(1-0\right)\right)\\
 &  & +\left(\sum_{i\le n}1_{\left\{ C_{i}>0\right\} }-1\right)\prod_{i\le n}\left(1-0\right)\\
 & = & 1-\sum_{j\le n}1_{\left\{ C_{j}>0\right\} }+\left(\sum_{i\le n}1_{\left\{ C_{i}>0\right\} }-1\right)\\
 & = & 0.
\end{eqnarray*}
This verifies condition (\ref{eq:6}) for assemblies.

\subsection{Multisets}

In the multiset setting, we can take $Z_{i}\left(n,x\right)\sim\text{\text{NB}}\left(m_{i},x^{i}\right),$
for any $x\in\left(0,1\right)$, to obtain equation (\ref{eq:1})
($\S$2.3 of \cite{key-2}). Recall that the CDF of $Z\sim\text{NB}\left(r,p\right)$
is given by $\mathbb{P}\left(Z\le k\right)=1-I_{p}\left(k+1,r\right),$
where $I_{p}$ is the \textbf{regularized incomplete beta function}.
That is, $I_{x}\left(a,b\right)=\frac{B\left(x;a,b\right)}{B\left(a,b\right)}$,
where $B\left(a,b\right)=\int_{0}^{1}t^{a-1}\left(1-t\right)^{b-1}dt$,
defined for $\text{Re}\left(a\right)>0$ and $\text{Re}\left(b\right)>0$,
is the \textbf{beta function} and $B\left(x;a,b\right)=\int_{0}^{x}t^{a-1}\left(1-t\right)^{b-1}dt$
is the \textbf{incomplete beta function}. 
\begin{lem}
Given $a>0$, $\lim_{x\to1}I_{x}\left(a,b\right)=1.$
\end{lem}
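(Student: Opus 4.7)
The plan is to mirror the proof of Lemma 1 almost verbatim, expressing $1-I_x(a,b)$ as a tail integral that vanishes as $x\to 1^-$. First I would note that in the multiset application the second argument is $b=m_i\in\mathbb N$, and the lemma's hypothesis $a>0$ ensures that the integrand $t^{a-1}(1-t)^{b-1}$ is Lebesgue-integrable on $[0,1]$; in particular $B(a,b)=\int_0^1 t^{a-1}(1-t)^{b-1}\,dt$ is a finite positive number.

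Next I would use the elementary identity
\[
1-I_x(a,b)=\frac{B(a,b)-B(x;a,b)}{B(a,b)}=\frac{1}{B(a,b)}\int_x^{1} t^{a-1}(1-t)^{b-1}\,dt.
\]
This reduces the claim to showing that the tail integral $\int_x^1 t^{a-1}(1-t)^{b-1}\,dt$ tends to $0$ as $x\to 1^-$. That follows immediately from the dominated convergence theorem, using the integrand itself (multiplied by $\mathbf 1_{[x,1]}\to 0$ pointwise a.e.) as its own dominating function; alternatively, from the standard continuity of the indefinite integral of an integrable function.

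Combining these two steps gives $1-I_x(a,b)\to 0$, i.e.\ $I_x(a,b)\to 1$, which is the claim.

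The main ``obstacle'' is purely formal: one only has to justify the vanishing of the tail integral, and this is completely analogous to the step in Lemma 1 where the relation $\Gamma(a,b)=\Gamma(a)-\int_0^b t^{a-1}e^{-t}\,dt$ is combined with convergence of $\Gamma(a)$. I do not anticipate any genuine difficulty; the entire argument should fit in a few lines and closely parallels the structure of Lemma 1.
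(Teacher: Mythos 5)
Your proof is correct and uses essentially the same idea as the paper: the limit reduces to $B(x;a,b)\to B(a,b)$ as $x\to 1^-$, which is just continuity of the indefinite integral of the (integrable, since $a>0$ and $b>0$) beta density. The paper states this limit directly on $I_x(a,b)$ itself, while you phrase it as the vanishing of the complementary tail $\int_x^1 t^{a-1}(1-t)^{b-1}\,dt$; the two formulations are trivially equivalent.
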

\begin{proof}
We have 
\begin{eqnarray*}
\lim_{x\to1}I_{x}\left(a,b\right) & = & \lim_{x\to1}\frac{B\left(x;a,b\right)}{B\left(a,b\right)}\\
 & = & \lim_{x\to1}\frac{\int_{0}^{x}t^{a-1}\left(1-t\right)^{b-1}dt}{\int_{0}^{1}t^{a-1}\left(1-t\right)^{b-1}dt}\\
 & = & \frac{\int_{0}^{1}t^{a-1}\left(1-t\right)^{b-1}dt}{\int_{0}^{1}t^{a-1}\left(1-t\right)^{b-1}dt}\\
 & = & 1.
\end{eqnarray*}
\end{proof}
\noindent Using Theorem 2, $\mathcal{PM}\left(N\left(n\right)\right)$
equals 
\begin{eqnarray*}
 &  & 1-\Bigg(\sum_{j\le n}\left(1_{\left\{ C_{j}>0\right\} }\left(1-1_{\left\{ C_{j}>1\right\} }\left(1-I_{x^{j}}\left(C_{j}-1,m_{j}\right)\right)\right)\prod\limits _{\substack{i\not=j,\\
i\le n
}
}\left(1-1_{\left\{ C_{i}>0\right\} }\left(1-I_{x^{i}}\left(C_{i},m_{i}\right)\right)\right)\right)\\
 &  & +\left(\sum_{i\le n}1_{\left\{ C_{i}>0\right\} }-1\right)\prod_{i\le n}\left(1-1_{\left\{ C_{i}>0\right\} }\left(1-I_{x^{i}}\left(C_{i},m_{i}\right)\right)\right).
\end{eqnarray*}
Taking $x\to1$ and applying Lemma 2, we have 
\begin{eqnarray*}
\mathcal{PM}\left(N\left(n\right)\right) & \to & 1-\Bigg(\sum_{j\le n}\left(1_{\left\{ C_{j}>0\right\} }\left(1-1_{\left\{ C_{j}>1\right\} }\left(1-1\right)\right)\prod\limits _{\substack{i\not=j,\\
i\le n
}
}\left(1-1_{\left\{ C_{i}>0\right\} }\left(1-1\right)\right)\right)\\
 &  & +\left(\sum_{i\le n}1_{\left\{ C_{i}>0\right\} }-1\right)\prod_{i\le n}\left(1-1_{\left\{ C_{i}>0\right\} }\left(1-1\right)\right)\\
 & = & 1-\sum_{j\le n}1_{\left\{ C_{j}>0\right\} }+\left(\sum_{i\le n}1_{\left\{ C_{i}>0\right\} }-1\right)\\
 & = & 0,
\end{eqnarray*}
which verifies condition (\ref{eq:6}) for multisets.

\subsection{Selections}

In the selection setting, we can take $Z_{i}\left(n,x\right)\sim\text{Bin}\left(m_{i},\frac{x^{i}}{1+x^{i}}\right)$
with $0<x<\infty$ in order to obtain equation (\ref{eq:1}) ($\S$2.3
of \cite{key-2}). In our case, we are taking $p=\frac{x^{i}}{1+x^{i}},$
so $p\to1$ if and only if $x\to\infty$. Recall that the CDF of $Z\sim\text{Bin}\left(n,p\right)$
is given by $\mathbb{P}\left(Z\le k\right)=I_{1-p}\left(n-k,1+k\right)$.
Using Theorem 2, we can express $\mathcal{PM}\left(N\left(n\right)\right)$
as 
\begin{eqnarray*}
 &  & 1-\sum_{j\le n}\left(1_{\left\{ C_{j}>0\right\} }\left(1-1_{\left\{ C_{j>1}\right\} }I_{1-p}\left(m_{j}-C_{j}+2,C_{j}-1\right)\right)\prod\limits _{\substack{i\not=j,\\
i\le n
}
}\left(1-1_{\left\{ C_{i}>0\right\} }I_{1-p}\left(m_{i}-C_{i}+1,C_{i}\right)\right)\right)\\
 &  & +\left(\sum_{i\le n}1_{\left\{ C_{i}>0\right\} }-1\right)\prod_{i\le n}\left(1-1_{\left\{ C_{i}>0\right\} }I_{1-p}\left(m_{i}-C_{i}+1,C_{i}\right)\right).
\end{eqnarray*}

\begin{lem}
We have $\lim_{p\to1}I_{1-p}\left(n-k,1+k\right)=0$.
\end{lem}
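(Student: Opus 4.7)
The plan is to mirror the argument given for Lemma 2, now taking the limit at the opposite endpoint. The substitution $y = 1-p$ converts the condition $p \to 1$ into $y \to 0^+$, so it suffices to prove
\[
\lim_{y \to 0^+} I_{y}(n-k,\,1+k) \;=\; 0.
\]
I would then expand the regularized incomplete beta function according to its definition,
\[
I_{y}(n-k,\,1+k) \;=\; \frac{\int_{0}^{y} t^{n-k-1}(1-t)^{k}\,dt}{\int_{0}^{1} t^{n-k-1}(1-t)^{k}\,dt},
\]
so the task reduces to controlling numerator and denominator separately.

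For the denominator, note that in every place Lemma 3 will be applied in the selection setting, the indicator functions $1_{\{C_i>0\}}$ and $1_{\{C_j>1\}}$ guarantee that the first parameter is strictly positive (since $m_i \geq C_i$ is implicit: otherwise the binomial $\mathrm{Bin}(m_i, x^i/(1+x^i))$ could not have produced the value $C_i$ under $\sum i C_i = n$ in the selection setting). Under $n-k > 0$ and $1+k > 0$ the beta integral $B(n-k,1+k)$ is a finite positive constant.

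For the numerator, since $n-k \ge 1$ the integrand $t^{n-k-1}(1-t)^k$ is continuous on $[0,1]$, so the function $y \mapsto \int_0^y t^{n-k-1}(1-t)^k\,dt$ is continuous on $[0,1]$ with value $0$ at $y=0$. Therefore it tends to $0$ as $y \to 0^+$. Dividing the vanishing numerator by the fixed positive denominator gives the limit $0$, exactly as in Lemma 2 but at the lower endpoint.

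The only even mildly subtle point is making sure that the parameters are in the range $n-k>0$, $1+k>0$ so the denominator $B(n-k,1+k)$ is a well-defined positive number; otherwise the proof is essentially a direct computation from the definition of $I_y(a,b)$ using continuity of the integral with respect to its upper limit. I do not anticipate any real obstacle.
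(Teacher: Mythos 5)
Your proof is correct and follows essentially the same route as the paper: rewrite $I_{1-p}(n-k,1+k)$ as the ratio $\int_0^{1-p} t^{n-k-1}(1-t)^k\,dt / \int_0^1 t^{n-k-1}(1-t)^k\,dt$ and observe that the numerator tends to $0$ as $p\to1$ while the denominator is a fixed positive constant. Your extra care in checking that the parameters satisfy $n-k>0$ and $1+k>0$ (so the beta integral is finite and positive) is a sound addition that the paper leaves implicit.
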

\begin{proof}
\begin{eqnarray*}
\lim_{p\to1}I_{1-p}\left(n-k,1+k\right) & = & \lim_{p\to1}\frac{B\left(1-p;n-k,1+k\right)}{B\left(n-k,1+k\right)}\\
 & = & \lim_{p\to1}\frac{\int_{0}^{1-p}t^{n-k-1}\left(1-t\right)^{k}}{\int_{0}^{1}t^{n-k-1}\left(1-t\right)^{k}}\\
 & = & 0.
\end{eqnarray*}
\end{proof}
\noindent Using Lemma 3, we see that 
\begin{equation}
I_{1-p}=I_{1-\frac{x^{i}}{1+x^{i}}}\to0\label{eq:9}
\end{equation}

\noindent if $x\to\infty$. Thus, we apply Theorem 2 and Lemma 3 while
taking $x\to\infty$ to obtain 
\begin{eqnarray*}
\mathcal{PM}\left(N\left(n\right)\right) & \stackrel{\eqref{eq:9}}{\to} & 1-\sum_{j\le n}1_{\left\{ C_{j}>0\right\} }+\left(\sum_{i\le n}1_{\left\{ C_{i}>0\right\} }-1\right)\\
 & = & 0,
\end{eqnarray*}
which verifies condition (\ref{eq:6}) for selections.

\section{Using Pivot Mass to Provide Couplings}

Let $S$ and $T$ be complete separable metric spaces. Denote by $p_{S}$
the projection of $S\times T$ onto $S$. Let $\omega$ be a nonempty
closed subset of $S\times T$ and $\varepsilon\ge0$. The following
result is Theorem 11 of \cite{key-4}. 
\begin{thm}
\footnote{We thank Anthony Quas for suggesting the use of Hall's Marriage Theorem.
Strassen's Theorem is a variant of the marriage theorem.}(Strassen) There is a probability measure $\lambda$ in $S\times T$
with marginals $\mu$ and $\nu$ such that $\lambda\left(\omega\right)\ge1-\varepsilon$,
if and only if for all closed sets $L\subseteq T$
\begin{equation}
\nu\left(L\right)\le\mu\left(p_{s}\left(\omega\cap\left(S\times L\right)\right)\right)+\varepsilon.\label{eq:10}
\end{equation}
\end{thm}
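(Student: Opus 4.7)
The plan is to prove Strassen's theorem in two directions: the easy necessity by a direct marginal decomposition, and the sufficiency first in the finite discrete setting via a max-flow/min-cut (Hall-style) argument, and then by a weak-convergence extension to the complete separable metric case.

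For necessity, suppose a $\lambda$ with the stated properties exists. For any closed $L \subseteq T$, decompose
$$\nu(L) = \lambda(S \times L) = \lambda(\omega \cap (S \times L)) + \lambda(\omega^c \cap (S \times L)).$$
The first summand is at most $\mu(p_S(\omega \cap (S \times L)))$ because $\omega \cap (S \times L) \subseteq p_S(\omega \cap (S \times L)) \times T$ and the $S$-marginal of $\lambda$ is $\mu$; the second summand is bounded by $\lambda(\omega^c) \le \varepsilon$. Adding these yields \eqref{eq:10}.

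For sufficiency in the finite discrete case ($S$ and $T$ finite), the task reduces to exhibiting a nonnegative matrix $\lambda(s,t)$ with row sums $\mu(s)$, column sums $\nu(t)$, and total mass off $\omega$ at most $\varepsilon$. I would set this up as a flow network with source $\sigma$ and sink $\tau$, nodes for each $s \in S$ and $t \in T$, capacity $\mu(s)$ on each edge $\sigma \to s$, capacity $\nu(t)$ on each edge $t \to \tau$, and capacity $+\infty$ on $s \to t$ whenever $(s,t) \in \omega$. Then the maximum $\sigma$-$\tau$ flow equals the largest achievable $\lambda(\omega)$. By the max-flow/min-cut theorem, inequality \eqref{eq:10}, applied with $L$ taken to be the $T$-side of an arbitrary cut, is exactly the statement that every cut has capacity at least $1-\varepsilon$. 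This is the Hall-style matching condition alluded to in the footnote.

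For the general case, I would discretize: approximate $\mu$ and $\nu$ by finitely supported measures via Ulam's tightness theorem for Borel probability measures on complete separable metric spaces, choose partitions fine enough that the finite-case hypothesis survives with perturbation $\varepsilon + o(1)$, and apply the finite case to obtain joint measures $\lambda_n$ with $\lambda_n(\omega) \ge 1 - \varepsilon - o(1)$. Tightness of $\{\lambda_n\}$ is inherited from tightness of its marginals, so Prokhorov's theorem yields a weakly convergent subsequence with limit $\lambda$. Because $\omega$ is closed, the Portmanteau theorem gives $\lambda(\omega) \ge \limsup_n \lambda_n(\omega) \ge 1 - \varepsilon$, while the marginals of $\lambda$ remain $\mu$ and $\nu$. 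The main obstacle I expect is verifying that the hypothesis \eqref{eq:10} transfers cleanly under discretization: replacing $\omega$ by a partition-coarsened variant must not weaken the assumption by more than an amount vanishing with the mesh size. A secondary subtlety is that $p_S(\omega \cap (S \times L))$ is in general only analytic rather than Borel, which is absorbed by the inner regularity of $\mu$ on complete separable metric spaces.
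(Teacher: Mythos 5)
This theorem is not proved in the paper: it is quoted verbatim as Theorem~11 of Strassen's 1965 paper \cite{key-4}, and the authors simply cite it. So there is no ``paper's own proof'' to compare against, and the right benchmark is whether your sketch is a sound proof of the cited result.

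Your necessity direction is correct and clean: the decomposition $\nu(L)=\lambda(\omega\cap(S\times L))+\lambda(\omega^{c}\cap(S\times L))$, the inclusion $\omega\cap(S\times L)\subseteq p_{S}(\omega\cap(S\times L))\times T$, and the bound $\lambda(\omega^{c})\le\varepsilon$ give \eqref{eq:10}. Your finite-state sufficiency via max-flow/min-cut is also essentially right: a finite cut cannot sever an infinite-capacity $\omega$-edge, so if $L$ is the $T$-side of the cut then the $S$-side must contain $p_{S}(\omega\cap(S\times L))$, and \eqref{eq:10} is exactly the statement that every cut has capacity $\ge 1-\varepsilon$. One detail worth spelling out: the max-flow solution is only a \emph{sub}-coupling supported on $\omega$; you then need to add an arbitrary coupling of the residual marginals $\mu-\mu_{0}$ and $\nu-\nu_{0}$ (total mass $\le\varepsilon$) to restore the exact marginal constraints while keeping $\lambda(\omega)\ge 1-\varepsilon$.

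The general Polish-space case, however, is where your argument is genuinely incomplete, and not only in the place you flag. Discretizing coarsens $\omega$ to some $\omega_{n}\supseteq\omega$ built from partition cells; the finite case then delivers $\lambda_{n}(\omega_{n})\ge 1-\varepsilon$, not $\lambda_{n}(\omega)\ge 1-\varepsilon$. Your Portmanteau step needs $\limsup_{n}\lambda_{n}(\omega)\ge 1-\varepsilon$, and passing from $\lambda_{n}(\omega_{n})$ to $\lambda_{n}(\omega)$ (or to a closed neighborhood $\omega^{\delta}$ and then letting $\delta\downarrow 0$) is the crux; it has to be argued using closedness of $\omega$ together with a careful choice of partition and a diagonal argument, and the error term does not obviously vanish uniformly over cuts. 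You also need the partition cells to be closed (or to work with closed $\delta$-neighborhoods) so that the coarsened $L$'s remain closed and the hypothesis is actually usable. These are real obstacles, and as written your sketch identifies them rather than resolves them. Finally, a point of context: Strassen's actual proof in \cite{key-4} proceeds by Hahn--Banach/minimax duality (and a regularity reduction), not by discretization plus max-flow; the footnote's reference to Hall's marriage theorem is a heuristic analogy, not the mechanism of the proof in the source. For the application in this paper, $T$ is finite and $S$ is countable, so a direct countable-state version (finite exhaustion of $S$ plus a compactness/diagonal argument, no Prokhorov needed) would suffice and would sidestep most of the Polish-space machinery.
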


\begin{proof}[Proof of Theorem 1]

Let us define 
\begin{align*}
S & =\left(\mathbb{Z}_{\ge0}\right)^{n},\\
T & =\left\{ \left(a_{i}\right)_{i\le n}\in\left(\mathbb{Z}_{\ge0}\right)^{n}:\sum_{i\le n}ia_{i}=n\right\} ,
\end{align*}
corresponding to the set of row labels and the set of column labels
respectively, and endow both $S$ and $T$ with the metric $d$ on
$\mathbb{Z}^{n}$ defined as 
\[
d\left(\left(x_{i}\right)_{i\le n},\left(y_{i}\right)_{i\le n}\right)\coloneqq\max_{i\le n}\left|x_{i}-y_{i}\right|.
\]
Since $S$ is finite and $T$ is countably infinite, both $S$ and
$T$ are separable. In both $S$ and $T$ we have 
\begin{equation}
\left(x_{i}\right)_{i\le n}\not=\left(y_{i}\right)_{i\le n}\implies d\left(\left(x_{i}\right)_{i\le n},\left(y_{i}\right)_{i\le n}\right)\ge1\label{eq:11}
\end{equation}
since our $n$-tuples are integer-valued. Therefore, every Cauchy
sequence in $S$ (or in $T$) converges in $S$ (or in $T$). Thus,
$S$ and $T$ are complete. Our goal is to apply Theorem 4 with $\varepsilon=0$
and 
\begin{eqnarray*}
\omega & = & P^{c},\\
L & = & L\left(n\right),\\
\mu_{\left(n,x\right)}\left(i\right) & = & \mathbb{P}\left(M\left(n,x\right)=i\right),i\in S,\\
\nu_{n}\left(j\right) & = & \mathbb{P}\left(N\left(n\right)=j\right),j\in T,\\
\lambda & = & p,
\end{eqnarray*}
where $P=\left\{ \left(i,j\right)\in S\times T:\left(i,j\right)\text{ is a pivot}\right\} $,
$L\left(n\right)$ denotes an arbitrary subset of $T$, and $p$ is
our desired joint PMF, with marginals corresponding to $M\left(n,x\right)$
and $N\left(n\right)$, such that $\left(i,j\right)\in P$ implies
$p\left(i,j\right)=0$. Let us endow $\omega$ with the metric $d_{\omega}$
obtained by restricting the metric
\begin{equation}
d_{S\times T}\left(\left(\left(s_{i}\right)_{i\le n},\left(t_{i}\right)_{i\le n}\right),\left(\left(s_{i}'\right)_{i\le n},\left(t_{i}'\right)_{i\le n}\right)\right)\coloneqq\max\left(d\left(\left(s_{i}\right)_{i\le n},\left(s_{i}'\right)_{i\le n}\right),d\left(\left(t_{i}\right)_{i\le n},\left(t_{i}'\right)_{i\le n}\right)\right)\label{eq:12}
\end{equation}
on $S\times T$ to $\omega$. To show that $\omega$ is closed, we
first show that $S$ and $T$ are closed. The set $T$ is closed since
it is finite. Suppose that 
\[
\left(\left(s_{i}\left(k\right)\right)_{i\le n}\right)_{k\in\mathbb{N}}
\]
is a sequence of $n$-tuples $\left(s_{i}\left(k\right)\right)_{i\le n}\in S$
with 
\[
\lim_{k\to\infty}\left(s_{i}\left(k\right)\right)_{i\le n}=l_{1}
\]
for some $n$-tuple $l_{1}\in\mathbb{Z}^{n}$. To show that $S$ is
closed, it suffices to show that $l_{1}\in S$. For all $\varepsilon'\in\left(0,1\right)$
there exists a constant $K\in\mathbb{N}$ such that 
\[
k>K\implies d\left(\left(s_{i}\left(k\right)\right)_{i\le n},l_{1}\right)<\varepsilon'.
\]
Since $\varepsilon'<1$, (\ref{eq:11}) implies
\[
l_{1}=\left(s_{i}\left(K+1\right)\right)_{i\le n},
\]
so $l_{1}\in S$. Therefore, $S$ is closed. Now to show that $\omega$
is closed in $S\times T$, suppose that 
\[
\left(\left(s_{i}\left(k\right)_{i\le n}\right),\left(t_{i}\left(k\right)\right)_{i\le n}\right)_{k\in\mathbb{N}}
\]
is a sequence of pairs 
\[
\left(\left(s_{i}\left(k\right)_{i\le n}\right),\left(t_{i}\left(k\right)\right)_{i\le n}\right)\in\omega
\]
of $n$-tuples $s_{i}\left(k\right)_{i\le n}\in S,\left(t_{i}\left(k\right)\right)_{i\le n}\in T$
with 
\[
\lim_{k\to\infty}\left(\left(s_{i}\left(k\right)_{i\le n}\right),\left(t_{i}\left(k\right)\right)_{i\le n}\right)=\left(l_{1},l_{2}\right).
\]
for some $n$-tuples $l_{1},l_{2}\in\mathbb{Z}^{n}$. Since $S$ and
$T$ are closed, we have $l_{1}\in S$ and $l_{2}\in T$. For all
$\varepsilon'\in\left(0,1\right)$ there exists a constant $K\in\mathbb{N}$
such that 
\[
k>K\implies d_{S\times T}\left(\left(s_{i}\left(k\right)_{i\le n}\right),\left(l_{1},l_{2}\right)\right)<\varepsilon'.
\]
Therefore, 
\[
k>K\overset{\left(12\right)}{\implies}d\left(\left(s_{i}\left(k\right)\right)_{i\le n},l_{1}\right),d\left(\left(t_{i}\left(k\right)\right)_{i\le n},l_{2}\right)<\varepsilon'.
\]
Since $\varepsilon'<1$, we have
\[
k>K\overset{\left(11\right)}{\implies}d\left(\left(s_{i}\left(k\right)\right)_{i\le n},l_{1}\right)=d\left(\left(t_{i}\left(k\right)\right)_{i\le n},l_{2}\right)=0.
\]
Therefore, applying (\ref{eq:11}) twice, we obtain
\[
\left(l_{1},l_{2}\right)=\left(\left(s_{i}\left(K+1\right)\right)_{i\le n},\left(t_{i}\left(K+1\right)\right)_{i\le n}\right)\in\omega,
\]
so $\omega$ is a closed subset of $S\times T$.

Further, $\omega\not=\emptyset$ since given any column label $j$,
the pair $\left(j,j\right)$ belongs to $\omega$. Note that the set
$L\left(n\right)$ is a closed subset of the column labels since $L\left(n\right)$
is a finite. Moreover,

\begin{eqnarray*}
v_{n}\left(L\left(n\right)\right) & = & \mathbb{P}\left(N\left(n\right)\in L\left(n\right)\right)\\
 & = & \frac{\#L\left(n\right)}{k_{n}},
\end{eqnarray*}
and

\begin{eqnarray*}
\mu_{\left(n,x\right)}\left(p_{s}\left(\omega\cap\left(S\times U\right)\right)\right) & = & \mathbb{P}\left(M\in p_{S}\left(\omega\cap\left(S\times U\right)\right)\right)\\
 & = & \mathbb{P}\left(M\in p_{S}\left(P^{c}\cap\left(S\times L\left(n\right)\right)\right)\right)\\
 & = & \mathbb{P}\left(\exists j\in L\left(n\right):\left(M,j\right)\not\in P\right)\\
 & = & 1-\mathcal{PM}_{\left(n,x\right)}\left(L\left(n\right)\right).
\end{eqnarray*}
Therefore, (\ref{eq:10}) is equivalent to 
\[
\frac{\#L\left(n\right)}{k_{n}}\le1-\mathcal{PM}_{\left(n,x\right)}\left(L\left(n\right)\right).
\]
The latest inequality is equivalent to 
\begin{equation}
\mathcal{PM}_{\left(n,x\right)}\left(L\left(n\right)\right)\le1-\frac{\#L\left(n\right)}{k_{n}}.\label{eq:13}
\end{equation}

\noindent By (\ref{eq:6}), the left hand side can be made arbitrarily
small, so (\ref{eq:13}) holds when $1-\frac{\#L\left(n\right)}{k_{n}}>0$.
When $1-\frac{\#L\left(n\right)}{k_{n}}=0$, we must have $L\left(n\right)=A_{n},$
so that $\mathcal{PM}_{\left(n,x\right)}\left(L\left(n\right)\right)=0$
by Theorem 3. Therefore, by the conclusion of Strassen's Theorem,
there exists a joint probability measure $p$, with marginals $\mathbb{P}\left(M\left(n,x\right)=\cdot\right)$
and $\mathbb{P}\left(N\left(n\right)=\cdot\right)$, such that $p\left(\omega\right)=1$.
I.e, the probability of having no pivot in this joint distribution
is $1$. Hence, the proof of Theorem 1 is complete.

\end{proof}


\begin{thebibliography}{1}
\bibitem{key-1} Arratia, R. On the amount of dependence in the prime
factorization of a uniform random integer. In B. Bollob\'as, editor,
Contemporary Combinatorics, pages 29\textendash 91. Bolyai Society
Mathematical Studies, Volume \textbf{10}, 2002.

\bibitem{key-2} Arratia, R, Barbour, A.D., and Tavare, S. \textsl{Logarithmic
Combinatorial Structures: A Probabilistic Approach}. EMS Monographs
in Mathematics. European Mathematical Society (EMS), 2003.

\bibitem[3]{key-3} Quas, Anthony. Reference Request for Couplings
with Conditions, URL (version: 2017-10-27): https://mathoverflow.net/q/284525.

\bibitem[4]{key-4} Strassen, V. The Existence of Probability Spaces
with given Marginals. Ann. Math. Statist., Volume 36, Number 2 (1965),
423-439.

\bibitem[5]{key-5} Wilf, H. \textsl{Generatingfunctionology} (2nd
ed.). Academic Press, 1994.
\end{thebibliography}
\end{document}